\newtheorem{theorem}{Theorem}[section]
\newtheorem{proposition}[theorem]{Proposition}
\newtheorem{lemma}[theorem]{Lemma}
\newtheorem{corollary}[theorem]{Corollary}
\newtheorem*{namedtheorem}{\theoremname}
\newcommand{\theoremname}{testing}
\newenvironment{named}[1]{\renewcommand{\theoremname}{#1}\begin{namedtheorem}}{\end{namedtheorem}}
\theoremstyle{definition}
\newtheorem{definition}[theorem]{Definition}
\title[Hyperbolic Knots Given by Positive Braids]{Hyperbolic Knots given by Positive Braids with at Least two Full Twists}
\author{Thiago de Paiva}
\address[]{School of Mathematics, Monash University, VIC 3800, Australia }
\email[]{thiago.depaivasouza@monash.edu}
\begin{document}

\begin{abstract}
We give some conditions on positive braids with at least two full twists that ensure their closure is a hyperbolic knot, with applications to the geometric classification of T-links, arising from dynamics, and twisted torus knots. 
\end{abstract} 

% Put title, author, abstract at the front
\maketitle

\section{Introduction}
Thurston proved that any non-trivial knot in the 3-sphere is either a \emph{torus knot}, a \emph{satellite knot}, or has \emph{hyperbolic complement} \cite{Thurston}. These are called the \emph{geometric types} of the knot. A satellite knot is a knot whose complement has an essential torus; a torus knot is a knot whose complement has essential annulus but does not have any essential torus; finally, a hyperbolic knot is a knot that is neither a torus knot nor a satellite knot.  
Thus, the \emph{geometric classification} of knots is related to the study of certain types of surfaces in the knot complement in $S^3$.

The geometric classification of knots has been an important task in modern research in knot theory.
\emph{Alexander's theorem} tells us that every knot can be represented as the closure of a braid \cite{Birman}. So, a lot of study has been done to understand the knot types of knots given by braids. 
For example, Birman and Menasco studied surfaces in closed braid complements \cite{finiteness}. They also classified the positions of essential tori in closed braid complements \cite{positions}. 
Los proved that the Nielsen-Thurston classification of braids and the geometric structure of knot complements are related \cite{Los}. Ito studied the topology and geometry of closed braid complements by examining the Dehornoy ordering \cite{Ito}, which is a left-invariant total order on the braid group, introcuded by Dehornoy \cite{Dehornoy}. 
 
Some important families of knots have appeared naturally with projections given by braids, such as \emph{Lorenz links} and \emph{twisted torus knots}. Twisted torus knots were introduced by Dean in his doctoral thesis \cite{Thesis} to study
Seifert fibered spaces obtained by Dehn fillings.
Lorenz links are knotted closed periodic orbits in the flow of the Lorenz system. The Lorenz system is a system of three ordinary differential equations in $\mathbb{R}^3$ introduced by the meteorologist Edward Lorenz to predict weather patterns. Birman and Kofman proved that Lorenz links coincide with \emph{T-links} \cite{newtwis}, which are links given by certain positive braids.

There haven't been many concrete conditions on braids to ensure their geometric type. For example, not much is known about the geometric classification of T-knots.
However, there has been some study on the geometric classification of twisted torus knots(see \cite{LeeTorusknotsobtained}, \cite{hyperbolicity}, \cite{Composite}) of which some form a small class of T-knots.

This work builds upon the previous works by finding some conditions on some braids so that they close to give hyperbolic knots.

Let $\sigma_1, \dots, \sigma_{n-1}$ be the standard generators of the braid group $B_n$. 

\begin{definition}
Let $i, j, r$ be positive integers with $i<j$. The $(i, j, r)$-torus braid, denoted by $B_{i,j}^r$, is defined by the braid$$(\sigma_i\dots \sigma_{j-1})^{r}.$$
\end{definition}

Our main theorem is the following.

\begin{theorem}\label{theorem1}
Let $K$ be a knot given by a positive braid $B$ with $p$ strands. Suppose that $B$ has one $(1, p, q + pk)$-torus braid, where $p, q$  are positive coprime integers with $p>q$ and $q, k\geq 2$, but $B \neq B_{1, p}^{q + pk}$. Also, assume that the braid $B(\sigma_{p-1}^{-1}\dots \sigma_1^{-1})^{kp}$ has braid index equal to $q$. Then, the knot $K$ is hyperbolic.
\end{theorem}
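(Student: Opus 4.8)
The plan is to invoke Thurston's trichotomy recalled in the introduction: it suffices to prove that $K$ is neither a torus knot nor a satellite knot. The organizing principle is the \emph{twist-circle description} of $K$. Set $\beta = B(\sigma_{p-1}^{-1}\cdots\sigma_1^{-1})^{kp}$, the \emph{reduced braid} obtained from $B$ by deleting $k$ full twists, so that $B$ is $\beta$ with the $k$ full twists $(\sigma_1\cdots\sigma_{p-1})^{pk}$ reinserted on all $p$ strands. Let $C$ be the unknotted twist circle bounding a disc that meets the $p$ strands transversely in the region carrying these full twists. Then $K$ is obtained from the closure $\hat\beta$ by $-1/k$ Dehn filling along $C$, so $S^3\setminus K$ is a Dehn filling of the link complement $M = S^3\setminus N(\hat\beta\cup C)$. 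The hypotheses $q,k\ge 2$ ensure that $B$ carries at least two full twists, and this is exactly what will make $M$ hyperbolic and keep the filling under control.

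First I would rule out that $K$ is a torus knot. Because $B$ is a positive braid containing a full twist, the Bennequin surface produced by Seifert's algorithm is a fibre surface, so $K$ is fibred with fibre genus determined by the number of crossings of $B$, and the Morton--Franks--Williams inequality is sharp, forcing the braid index of $K$ to equal $p$. Hence, were $K$ a torus knot, it would be $T(p,m)$ for some $m$ coprime to $p$; matching the fibre genus (and, if necessary, the Alexander polynomial) of $T(p,m)$ against that of $\hat B$ would determine $m$ and force $B$ to be conjugate to $B_{1,p}^{q+pk}$, contradicting the hypothesis $B\ne B_{1,p}^{q+pk}$. The conditions $\gcd(p,q)=1$ and $p>q$ enter here, guaranteeing that the torus-braid factor closes to a knot and locating the relevant torus knot.

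The main work, and the step I expect to be the chief obstacle, is to show that $K$ is not a satellite knot, that is, that $S^3\setminus K$ admits no essential torus. I would first prove that $M = S^3\setminus N(\hat\beta\cup C)$ is hyperbolic: the hypothesis that $\hat\beta$ has braid index $q$ with $2\le q<p$, together with the fact that $\hat\beta$ is not a torus knot (by the argument of the previous paragraph applied to $\beta$, using $B\ne B_{1,p}^{q+pk}$), places $\hat\beta\cup C$ outside the short list of non-hyperbolic augmented links, and the Birman--Menasco analysis of incompressible surfaces in closed-braid complements, cited in the introduction, rules out essential annuli and tori in $M$. It then remains to show that the $-1/k$ filling with $k\ge 2$ does not create an essential torus. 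Here the braid-index hypothesis $b(\hat\beta)=q$ is decisive: it constrains the winding number of any putative satellite pattern and, through the Birman--Menasco classification of essential tori in closed braids, forces such a torus to descend to an essential torus already present in $M$, a contradiction. The delicate point is that one needs atoroidality to persist for the specific small fillings $k\ge 2$, rather than merely for all large $k$ as furnished by Thurston's hyperbolic Dehn surgery theorem; controlling the filling at $k\ge 2$ by exploiting positivity and the presence of two full twists --- for instance through a length estimate for the slope $-1/k$ on the cusp of $C$ --- is the technical heart of the proof.
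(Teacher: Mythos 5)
Your overall frame (Thurston's trichotomy: rule out torus knots, then satellites) matches the paper, and your use of Franks--Williams sharpness to get braid index $p$ is exactly the paper's first move in the torus-knot step. But both of your main steps have genuine gaps, and in each case the missing ingredient is the hypothesis $\beta\bigl(B(\sigma_{p-1}^{-1}\cdots\sigma_1^{-1})^{kp}\bigr)=q$, which your argument never actually uses beyond the setup.

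For the torus-knot step, your claim that matching the genus of $T(p,m)$ against that of $\hat B$ ``forces $B$ to be conjugate to $B_{1,p}^{q+pk}$'' is false. Genus matching only determines $m$ from the crossing number of $B$ (namely $m=c(B)/(p-1)$); it does not force $m=q+pk$. Concretely, $B=B_{1,p}^{q+p(k+1)}$ contains the $(1,p,q+pk)$-torus braid, is different from $B_{1,p}^{q+pk}$, and yet closes to the torus knot $T(p,q+p(k+1))$ --- so any argument using only positivity, the contained torus braid, and $B\neq B_{1,p}^{q+pk}$ must fail; this example is excluded only by the reduced-braid-index hypothesis. The paper's Proposition~\ref{proposition2} uses that hypothesis as follows: from braid index $p$ and genus, $K=T(p,q+pk+d)$ with $d>0$; performing $(-1/k)$-surgery on the braid axis and applying Lemma~\ref{Los} (Los: minimal-index closed-braid representatives of a torus knot are isotopic in the complement of the axis) shows $K'$, the closure of the reduced braid, is $T(p,q+d)$; then $\beta(K')=q$ forces $\min(p,q+d)=q$, hence $d=0$, a contradiction.

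For the satellite step, your route needs two things you do not establish. First, hyperbolicity of $M=S^3\setminus N(\hat\beta\cup C)$ does not follow from the hypotheses: the reduced closure $\hat\beta$ is only assumed to have braid index $q$, so it could itself be a satellite knot, in which case $M$ contains an essential torus and is not hyperbolic. Second, you yourself flag that atoroidality under the specific filling $-1/k$ with $k\geq 2$ is ``the technical heart'' and offer no way to control it; hyperbolic Dehn surgery gives nothing for small $k$, and no cusp-length estimate is available from the stated hypotheses. The paper's Proposition~\ref{proposition1} avoids geometry entirely: by Ito's theorem, the two full twists force any essential torus $T$ in $S^3-K$ to miss the braid axis, so $K$ is a generalized $b$-cabling with $b>1$ dividing $p$; the torus survives the $(-1/k)$-surgery, and then Theorem~\ref{Williams} (multiplicativity of braid index under generalized cabling) when the surgered companion is knotted, or Lemma~\ref{lemma1} when it is unknotted, gives $q=b\cdot\beta(T')$ or $q=b$, contradicting $\gcd(p,q)=1$. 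This transfer of the cabling structure through the surgery, converted into a braid-index/coprimality contradiction, is the key idea your proposal is missing.
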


To prove this result, we combine a result of Ito related to the positions of essential surfaces with respect to the braid axis of the closed braid with a result of Williams related to the braid index of generalized cablings.

This theorem has some interesting applications to the geometric classification of T-links and twisted torus knots. There is an important conjecture, which was proposed by Morton, related to the geometric classification of Lorenz knots(see \cite{dePaivaPurcell:SatellitesLorenz} for more details). It says the following: 
A Lorenz knot that is a satellite has companion a Lorenz knot. Its pattern, when embedded in an unknotted torus in $S^3$, is equivalent to a Lorenz knot in $S^3$. 
Theorem~\ref{theorem1} gives us the next corollary, which helps to reduce the cases we need to consider Morton's conjecture as well as to understand the geometric classification of T-knots.

\begin{corollary}\label{T-knot} 
Let $p, q$ be positive coprime integers. Consider $p>q\geq r_1>1$.  Then, for $k\geq 2$, the T-knots $$T((r_n, s_n), \dots, (r_1, s_1), (p, kp + q))\textrm{ and }T((r_n, s_n), \dots, (r_1, s_1), (kp + q, p))$$ are hyperbolic.  
\end{corollary}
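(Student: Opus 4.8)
The plan is to deduce Corollary~\ref{T-knot} directly from Theorem~\ref{theorem1} by exhibiting each of the two T-knots as the closure of a positive braid of the form required by the theorem. First I would recall the standard presentation of a T-link: the T-link $T((r_n,s_n),\dots,(r_1,s_1),(p,s))$ is the closure of the positive braid on $p$ strands obtained by stacking the torus-braid blocks $B_{1,r_1}^{s_1}\cdots B_{1,r_n}^{s_n}$ on top of the full block $B_{1,p}^{s}$, where the indices are arranged so that each $r_i\le p$. Here the last block $(p,kp+q)$ contributes exactly the $(1,p,\,q+pk)$-torus braid $B_{1,p}^{q+pk}$, with the integers $p>q$ coprime and, by hypothesis, $q,k\ge 2$. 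Thus I would set $B$ equal to this positive $p$-strand braid and check that it is precisely of the shape assumed in Theorem~\ref{theorem1}.

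The next step is to verify the two nontriviality conditions of the theorem. Since $r_1>1$ and the braid carries additional torus-braid blocks $B_{1,r_i}^{s_i}$ beyond the single full block, we have $B\neq B_{1,p}^{q+pk}$, so the first condition holds. For the braid-index condition I would analyze $B(\sigma_{p-1}^{-1}\cdots\sigma_1^{-1})^{kp}$: multiplying by $(\sigma_{p-1}^{-1}\cdots\sigma_1^{-1})^{kp}$ cancels $k$ full twists from the $B_{1,p}^{q+pk}=B_{1,p}^{q}\,B_{1,p}^{pk}$ block, leaving the braid $B_{1,r_1}^{s_1}\cdots B_{1,r_n}^{s_n}\,B_{1,p}^{q}$, whose closure is a torus-type link sitting on $q$ strands up to destabilization. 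Invoking Williams' result on the braid index of generalized cablings (the tool cited for Theorem~\ref{theorem1}), I would argue that this closure has braid index exactly $q$, since $p>q$ forces the $q$ remaining strands of the partial torus block to be essential and no further destabilization is possible. With both hypotheses confirmed, Theorem~\ref{theorem1} yields that the knot is hyperbolic.

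To cover the second family $T((r_n,s_n),\dots,(r_1,s_1),(kp+q,p))$ I would appeal to the symmetry of T-links under transposing the two entries of a torus pair, namely that the closures of $B_{1,p}^{kp+q}$ and $B_{1,kp+q}^{p}$ represent the same link (the $(p,kp+q)$ and $(kp+q,p)$ torus links coincide). After this reduction the second family is identified with the first up to isotopy, and hyperbolicity transfers automatically.

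I expect the main obstacle to be the braid-index computation, i.e.\ rigorously establishing that $B(\sigma_{p-1}^{-1}\cdots\sigma_1^{-1})^{kp}$ has braid index exactly $q$ rather than something smaller. The coprimality of $p$ and $q$, the bound $p>q$, and the presence of the extra blocks $B_{1,r_i}^{s_i}$ with $r_1>1$ should together prevent any collapse below $q$, and I would lean on Williams' generalized-cabling formula to pin the index down precisely; making that step fully airtight, including handling the interaction of the extra blocks with the destabilization, is where the real care is required.
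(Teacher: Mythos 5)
Your overall plan---plugging the T-link braid directly into Theorem~\ref{theorem1}---is essentially how the paper proceeds (its Proposition~\ref{proposition10} is exactly such an application of Theorem~\ref{theorem1}), but two of your steps have genuine gaps. The first is the braid-index hypothesis. You must show that the closure of $B'=B_{1,r_1}^{s_1}\cdots B_{1,r_n}^{s_n}B_{1,p}^{q}$ has braid index exactly $q$, and Williams' theorem (Theorem~\ref{Williams}) cannot do this: that result computes $\beta(L')=q\,\beta(L)$ when $L'$ is presented as a generalized cabling of a link $L$ whose components are non-trivial knots, and there is no cabling structure in sight here; in the paper, Williams' theorem is used only inside Proposition~\ref{proposition1} to rule out essential tori, never to compute the braid index of the surgered braid. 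Likewise, ``sitting on $q$ strands up to destabilization'' is unsubstantiated: $B'$ is a braid on $p>q$ strands, and no sequence of Markov destabilizations visibly reduces it. The paper's actual argument (Proposition~\ref{proposition10}) is geometric: by Lemma~\ref{half-twist} each block $B_{1,r_i}^{s_i}$ is produced by full and/or half twists along circles encircling at most $q$ strands; the rotation isotopy taking the $(p,q)$-torus braid to the $(q,p)$-torus braid carries those circles along, so the closure of $B'$ becomes a positive braid on $q$ strands; since $p>q$ that braid contains a full positive twist, and only then does [\cite{Franks}, corollary 2.4] pin the braid index to exactly $q$. Both the upper bound (an honest $q$-strand presentation) and the lower bound (the full-twist criterion) are missing from your sketch.

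The second gap is your treatment of $T((r_n,s_n),\dots,(r_1,s_1),(kp+q,p))$. The symmetry $T(p,Q)=T(Q,p)$ holds for the torus knot alone; once the blocks $B_{1,r_i}^{s_i}$ are stacked on top, the isotopy exchanging the two torus-braid presentations does not fix those blocks, and in general $T(\dots,(p,Q))$ and $T(\dots,(Q,p))$ are \emph{different} knots---the Birman--Kofman duality transposes all of the defining data, not just the last pair, and the equality $T(kp+q,p;r,1)=T(p,kp+q;r,1)$ invoked later in the paper is a cited theorem about twisted torus knots, not a formal symmetry of T-links. So ``the second family is identified with the first up to isotopy'' is false, and hyperbolicity does not transfer automatically. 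This is precisely why Proposition~\ref{proposition10} handles the second braid separately: it isotopes the $(q+kp)$-strand braid to a positive braid $B''$ on $p$ strands containing $B_{1,p}^{q+kp}$, verifies the braid-index hypothesis for $B''(\sigma_1^{-1}\cdots\sigma_{p-1}^{-1})^{kp}$, and then applies Theorem~\ref{theorem1} a second time. As written, your proposal covers at best the first family, and even there the key hypothesis of Theorem~\ref{theorem1} is not verified.
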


The geometric classification of twisted torus knots has been intensely studied but the knot types of twisted torus knots of the form $T(p, q; r,\pm 1)$ are not fully understood(see \cite{dePaiva:Unexpected} for more details). We obtain the following two corollaries for these open cases. 

\begin{corollary}\label{positiveTTT} 
Let $p, q$ be positive coprime integers. Consider $p>q\geq r>1$.  Then, for $k\geq 2$, the twisted torus knots $$T(p, kp + q; r, 1)\textrm{ and }T(kp + q, p; r, 1)$$ are hyperbolic.  
\end{corollary}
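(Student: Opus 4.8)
The plan is to derive Corollary~\ref{positiveTTT} as a special case of Corollary~\ref{T-knot}, which in turn follows from Theorem~\ref{theorem1}. So first I would recall the standard fact that the twisted torus knots $T(p, kp+q; r, 1)$ and $T(kp+q, p; r, 1)$ are exactly the T-knots $T((r,1),(p,kp+q))$ and $T((r,1),(kp+q,p))$ in the notation of Corollary~\ref{T-knot}, corresponding to the case $n=1$ with $(r_1, s_1) = (r, 1)$. Indeed, a twisted torus knot $T(p,s;r,1)$ is obtained from the torus knot $T(p,s)$ by adding a single full twist on $r$ adjacent strands, which is precisely the braid-word description of a T-link with one extra $(r,1)$-parameter block; thus the two families in Corollary~\ref{positiveTTT} literally coincide with the two families in Corollary~\ref{T-knot} when $n=1$.

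With that identification in hand, I would simply check that the hypotheses of Corollary~\ref{T-knot} are met. Corollary~\ref{T-knot} requires $p,q$ to be positive coprime integers with $p>q\geq r_1>1$ and $k\geq 2$. Under the hypotheses of Corollary~\ref{positiveTTT} we are given exactly that $p,q$ are positive coprime, $p>q\geq r>1$, and $k\geq 2$. Setting $r_1 = r$ and $(r_1,s_1)=(r,1)$, the chain of inequalities $p>q\geq r_1>1$ is satisfied verbatim. Hence Corollary~\ref{T-knot} applies and tells us that $T((r,1),(p,kp+q))$ and $T((r,1),(kp+q,p))$ are hyperbolic, which under the above dictionary are $T(p,kp+q;r,1)$ and $T(kp+q,p;r,1)$.

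The only genuinely substantive point, and the one I expect to require the most care, is verifying that the single-twist twisted torus knot $T(p,s;r,1)$ really is the T-knot $T((r,1),(p,s))$ and that adding the $(r,1)$ block keeps the resulting object a \emph{knot} rather than a link, so that Corollary~\ref{T-knot}'s knot-level conclusion transfers. I would pin this down by writing the defining positive braid: $T(p,kp+q;r,1)$ closes the braid $(\sigma_1\cdots\sigma_{r-1})^{r}(\sigma_1\cdots\sigma_{p-1})^{kp+q}$ on $p$ strands, which is exactly $B_{1,r}^{r}\,B_{1,p}^{kp+q}$ in the paper's torus-braid notation, matching the T-knot word once one fixes the convention relating $T(\cdots)$ to products of torus braids; the case $T(kp+q,p;r,1)$ is handled by the symmetric presentation on $kp+q$ strands. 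Since $\gcd(p,kp+q)=\gcd(p,q)=1$, the underlying torus braid closes to a knot, and a standard argument shows the single full twist on $r$ strands preserves connectivity, so the closure is indeed a knot.
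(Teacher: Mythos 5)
Your overall route is the same as the paper's: identify the twisted torus knots as T-knots and invoke Corollary~\ref{T-knot}. However, your stated dictionary contains a concrete error. Under the paper's conventions, $T(p,q;r,s)$ with $s>0$ is the closure of $(\sigma_1\cdots\sigma_{p-1})^{q}(\sigma_1\cdots\sigma_{r-1})^{sr}$: one full twist on $r$ strands contributes $r$ letters, not one. Hence $T(p,kp+q;r,1)$ is the T-knot $T((r,r),(p,kp+q))$ --- this is exactly the identity the paper's proof uses --- and \emph{not} $T((r,1),(p,kp+q))$, which is a genuinely different knot (a single pass of the first $r$ strands rather than a full twist on them). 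Your proposal is internally inconsistent on this point: the braid word you eventually write, $(\sigma_1\cdots\sigma_{r-1})^{r}(\sigma_1\cdots\sigma_{p-1})^{kp+q}=B_{1,r}^{r}B_{1,p}^{kp+q}$, is correct, but it is the word for $T((r,r),(p,kp+q))$, not for the T-knot you named. The damage is limited because Corollary~\ref{T-knot} imposes no condition on $s_1$ beyond positivity, only $p>q\geq r_1>1$; so setting $(r_1,s_1)=(r,r)$ makes everything you say go through verbatim. But as literally written, the claimed identification is false, and applying Corollary~\ref{T-knot} to $T((r,1),(p,kp+q))$ would prove hyperbolicity of the wrong knot.

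There is also one genuine (and legitimate) difference from the paper in how you treat the second family. For $T(kp+q,p;r,1)$ you apply the second case of Corollary~\ref{T-knot} directly, via the presentation $(\sigma_1\cdots\sigma_{kp+q-1})^{p}(\sigma_1\cdots\sigma_{r-1})^{r}$ on $kp+q$ strands (again this is $T((r,r),(kp+q,p))$, not $T((r,1),(kp+q,p))$). The paper instead cites \cite{Knottypes} for the equivalence $T(kp+q,p;r,1)=T(p,kp+q;r,1)$ and deduces hyperbolicity of the second knot from the first. Both are valid; your version has the advantage of using only what is already proved in the paper, while the paper leans on an external equivalence. Your closing remark on connectivity is fine: a full twist on $r$ strands is a pure braid, so the closure has the same number of components as the closure of $B_{1,p}^{kp+q}$, which is a knot since $\gcd(p,kp+q)=\gcd(p,q)=1$.
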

\begin{corollary}\label{negativeTTT} 
Let $p, q$ be positive coprime integers with $p>q$. Consider $p-q\geq r>1$. Then, for $k\geq 3$, the twisted torus knots $$T(p, kp + q; r, -1)\textrm{ and }T(kp + q, p; r, -1)$$ are hyperbolic.  
\end{corollary}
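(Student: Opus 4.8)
The plan is to deduce this from Theorem~\ref{theorem1} after rewriting each of the two knots as a suitable positive braid. First I would dispose of the symmetry. Since $r \le p-q < p \le kp+q$, we have $r \le \min(p,\,kp+q)$, so the standard equivalence $T(a,b;r,s)=T(b,a;r,s)$ (valid whenever $r\le\min(a,b)$) gives $T(p,kp+q;r,-1)=T(kp+q,p;r,-1)$, and it suffices to treat the first family. I would also work under the assumption $q\ge 2$, so that Theorem~\ref{theorem1} is available; the boundary value $q=1$ would be treated separately, since there the associated companion torus knot has braid index $2$ rather than $q$, so the numerical bookkeeping below degenerates.

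The decisive step is to convert the defining braid, which carries a \emph{negative} full twist, into a positive one. Writing $K=T(p,kp+q;r,-1)$ as the closure of $(\sigma_1\cdots\sigma_{p-1})^{kp+q}(\sigma_1\cdots\sigma_{r-1})^{-r}$ on $p$ strands, I would use the identity that $(\sigma_1\cdots\sigma_{p-1})^{p}(\sigma_1\cdots\sigma_{r-1})^{-r}$ is itself a positive braid $G$: the full twist $B_{1,p}^{p}$ on all $p$ strands absorbs the negative full twist on the first $r\le p$ strands, as one checks directly (for instance $(\sigma_1\sigma_2)^3\sigma_1^{-2}=\sigma_2\sigma_1^2\sigma_2$). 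Because the full twist $B_{1,p}^{p}$ is central, this yields
\[
(\sigma_1\cdots\sigma_{p-1})^{kp+q}(\sigma_1\cdots\sigma_{r-1})^{-r}
=(\sigma_1\cdots\sigma_{p-1})^{\,q+p(k-1)}\,G ,
\]
a genuinely positive braid $B$ on $p$ strands as soon as $k\ge 1$. It is exactly the demand $k-1\ge 2$, i.e. $k\ge 3$, that will match the hypothesis of Theorem~\ref{theorem1}; this single absorbed twist is what accounts for the loss of one full twist relative to Corollary~\ref{positiveTTT}.

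With $B$ in hand I would verify the hypotheses of Theorem~\ref{theorem1} for the parameters $q$ and $k'=k-1$. The braid $B$ contains the one $(1,p,q+pk')$-torus braid $B_{1,p}^{\,q+p(k-1)}$, and $B\ne B_{1,p}^{\,q+p(k-1)}$ because $G\ne 1$ (as $r<p$). The remaining, and hardest, point is the braid-index condition. Since $(\sigma_{p-1}^{-1}\cdots\sigma_1^{-1})^{(k-1)p}$ deletes the $k-1$ surplus full twists, the braid $B(\sigma_{p-1}^{-1}\cdots\sigma_1^{-1})^{(k-1)p}$ closes to $T(p,p+q;r,-1)$, and I must show this has braid index exactly $q$. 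Here I would invoke Williams' theorem on the braid index of generalized cablings, precisely as in the positive case; the hypothesis $p-q\ge r$ is the numerical condition under which that computation returns $q$ (the dual of the condition $q\ge r$ in Corollary~\ref{positiveTTT}). Granting this, Theorem~\ref{theorem1} applies and $K$ is hyperbolic. The main obstacle is exactly this last step: one must establish that removing a negative full twist on $r\le p-q$ strands from $T(p,p+q)$ lowers the braid index all the way from $p$ down to $q$, and it is Williams' result that does the real work there.
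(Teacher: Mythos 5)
Your rewriting of $T(p,kp+q;r,-1)$ as the closure of a positive braid is correct: $(\sigma_1\cdots\sigma_{p-1})^{p}(\sigma_1\cdots\sigma_{r-1})^{-r}$ is indeed positive, and centrality of the full twist gives $B=B_{1,p}^{\,q+p(k-1)}G$ as you claim. The fatal problem is the braid-index hypothesis you need in order to apply Theorem~\ref{theorem1}: it is \emph{false}. The reduced braid $B(\sigma_{p-1}^{-1}\cdots\sigma_1^{-1})^{(k-1)p}$ closes to $T(p,p+q;r,-1)$, and this knot does not in general have braid index $q$. Whenever $r\le q$ (which your hypotheses allow, e.g.\ $p=5$, $q=2$, $r=2$, $k\ge 3$), a cyclic permutation exhibits a positive braid on $p$ strands containing a full twist: for instance, using $(\sigma_1\sigma_2\sigma_3\sigma_4)^2=\sigma_1^2(\sigma_2\sigma_1)(\sigma_3\sigma_2)(\sigma_4\sigma_3)$, one gets that $T(5,7;2,-1)$ is the closure of
\[
(\sigma_2\sigma_1)(\sigma_3\sigma_2)(\sigma_4\sigma_3)\,(\sigma_1\sigma_2\sigma_3\sigma_4)^5 ,
\]
a positive $5$-braid containing the full twist $(\sigma_1\sigma_2\sigma_3\sigma_4)^5$. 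By [\cite{Franks}, corollary 2.4] --- the very criterion used throughout the paper --- its braid index is $5$, not $q=2$. The conceptual error is that the negative full twist on $r$ strands only \emph{partially} cancels one positive full twist on $p$ strands, so after deleting the $k-1$ ``surplus'' twists a full positive twist still survives in the closure; no choice of how many full twists you strip off can make the braid index drop to $q$ while the braid stays positive. (Your appeal to ``Williams' theorem, precisely as in the positive case'' also misremembers the mechanism: in Proposition~\ref{proposition10} the braid index of the reduced braid is computed by flipping the $(p,q)$-torus braid to the $(q,p)$-torus braid, which works because every twist region there spans at most $q$ strands, plus \cite{Franks}; Theorem~\ref{Williams} is used only to exclude satellites. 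In your setting the twist region spans $r$ strands with $r\le p-q$, and $r$ may exceed $q$, so even that flip is unavailable.)

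The paper's proof takes a route that is genuinely different from an application of Theorem~\ref{theorem1} with companion parameter $q$, and it is exactly designed to get around the obstruction above: it does surgery on the braid axis removing $k+3$ full twists, deliberately overshooting past zero, so that $T(p,kp+q;r,-1)$ becomes (up to mirror image, which does not change the complement) the \emph{positively} twisted knot $T(p,3p-q;r,+1)$ --- the leftover negative twist flips sign in the mirror. Since $3p-q=2p+(p-q)$, $\gcd(p,p-q)=1$ and $p>p-q\ge r>1$, Corollary~\ref{positiveTTT} (with $q'=p-q$ and $k'=2$) shows that knot is hyperbolic; this is where the hypothesis $p-q\ge r$ actually enters, not as a braid-index condition for the negative knot. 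Essential tori in the original complement are then excluded by combining Ito's theorem \cite{Ito} (the torus misses the braid axis, so it survives the surgery) with this hyperbolicity, and the torus-knot possibility is excluded using lemma~\ref{Los} and the same hyperbolicity. Note finally that this argument covers $q=1$ uniformly (since $q'=p-1\ge 2$), whereas your proposal leaves that case, which the statement includes, entirely unproved.
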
 

\subsection{Acknowledgment}I am grateful to Jessica Purcell, my supervisor, and Sangyop Lee for helpful discussions. 

\section{The last case of Williams' result}

In this section we'll find the braid indexes of generalized cablings over trivial knots. This case wasn't considered by Williams in \cite{Williams}, but it'll be helpful for this work.
 
Williams extended the notion of cable knots by defining generalized cabling in \cite{Williams} as follows:

\begin{definition}A
generalized $q$-cabling of a link $L$ we mean a link $L'$ contained in the
interior of a tubular neighbourhood $L\times D^2$ of $L$ such that
\begin{enumerate}
\item each fiber $D^2$ intersects $L'$ transversely in $q$ points;  and

\item all strands of $L'$ are oriented in the same direction as $L$ itself.
\end{enumerate} 
\end{definition}

He also proved the following theorem in the same paper.

\begin{theorem}\label{Williams}
The braid index is multiplicative under generalized cabling. In detail, if $L$ is a link with each
component a non-trivial knot and $L'$ is a generalized $q$-cabling of $L$
then $$\beta(L') = q\beta(L),$$ where $\beta(*)$ is the braid index of $*$.
\end{theorem}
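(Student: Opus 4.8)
The plan is to prove the two inequalities $\beta(L') \le q\beta(L)$ and $\beta(L') \ge q\beta(L)$ separately, since the upper bound is elementary and the genuine content of the theorem is the lower bound. Throughout I would use the Morton--Franks--Williams (MFW) inequality as the main lower-bound tool for the braid index, and an explicit cabling construction for the upper bound.

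For the upper bound, realize $L$ as the closure of a braid $\gamma$ on $n = \beta(L)$ strands, with braid axis $A$. The complement $V = S^3 \setminus N(A)$ is fibered by meridian disks, each meeting $L$ transversely in $n$ coherently oriented points. Thicken $L$ to a tube $N(L) = L \times D^2$ kept transverse to this fibration, so that each meridian disk of $V$ meets $N(L)$ in $n$ small subdisks, each a fiber $\{*\} \times D^2$ of the tube. By condition (1) in the definition of a generalized $q$-cabling, $L'$ meets each such subdisk in $q$ points, and by condition (2) these are coherently oriented and agree with the orientation of $L$. Hence $L'$ meets every fiber of $V$ in $qn$ coherently oriented points, so $L'$ is a closed braid on $qn$ strands with axis $A$, giving $\beta(L') \le qn = q\beta(L)$.

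For the lower bound I would apply MFW to $L'$: writing $P_L(v,z)$ for the HOMFLY polynomial and $\mathrm{br}_v(L)$ for the spread (maximal minus minimal $v$-degree), one has $\beta(L') \ge \tfrac12\,\mathrm{br}_v(L') + 1$. The key step is to compute this spread and show it equals $2\bigl(q\beta(L) - 1\bigr)$, which combined with the upper bound forces the equality $\beta(L') = q\beta(L)$. To do this I would express the coherently oriented $q$-strand pattern of the cabling in the standard basis of the skein $\mathcal{C}$ of the annulus (the Hecke-algebra / power-sum basis) and apply the satellite formula for the HOMFLY polynomial, writing $P_{L'}$ as a finite combination of HOMFLY polynomials of the companion $L$ decorated by these basis elements. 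The extreme $v$-degrees of $P_{L'}$ are then governed by the extreme basis term together with the extreme $v$-degrees of $P_L$ and the framing carried by the braid presentation of $L$, and a careful accounting should yield the claimed spread $2(q\beta(L)-1)$.

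The hard part will be controlling cancellation at the extreme $v$-degrees of this satellite sum: a priori the top-degree (and bottom-degree) contributions coming from different basis terms could cancel, making $\mathrm{br}_v(L')$ strictly smaller than expected and the resulting MFW bound too weak. This is precisely where the hypothesis that every component of $L$ is a \emph{non-trivial} knot enters: for a non-trivial knot the extreme $v$-coefficients of $P_L$ are nonzero, and under the satellite substitution the leading and trailing terms survive rather than collapsing, so that $\mathrm{br}_v(L')$ attains its maximal value and the MFW bound is sharp. For a trivial component the extreme coefficients degenerate and this argument breaks down, which is exactly the case excluded here and treated separately in this section. Once the no-cancellation statement is established, MFW gives $\beta(L') \ge q\beta(L)$, and together with the upper bound this yields $\beta(L') = q\beta(L)$.
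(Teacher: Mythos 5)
Your upper-bound argument is fine, but the lower bound is where the entire content lies, and the route you propose cannot be made to work. Note first that the paper does not prove this theorem at all: it quotes it from Williams, whose proof is geometric (essential tori positioned relative to the braid axis of a minimal closed-braid representative, in the style of Birman--Menasco), not polynomial. Your plan requires showing that the $v$-spread of $P_{L'}$ equals $2\bigl(q\beta(L)-1\bigr)$, i.e.\ that the Morton--Franks--Williams bound is \emph{sharp} for $L'$. But MFW is not sharp in general, even for non-trivial knots: the knot $9_{42}$ has braid index $4$ while MFW gives only $3$. Your sketch moreover claims the extreme $v$-degrees of $P_{L'}$ are ``governed by'' those of $P_L$; for a companion with positive MFW deficiency this would cap the MFW bound for $L'$ strictly below $q\beta(L)$, so the inequality you would obtain is too weak precisely when it is needed. (The claim is also unjustified on its own terms: the satellite formula expresses $P_{L'}$ through the decorated invariants $P_{L;\,Q}$ for skein elements $Q$ of the annulus, and these are not determined by $P_L$.) Work of Kawamuro on the MFW deficiency shows that non-sharpness in fact persists under cabling, so no amount of careful accounting of cancellations will close this gap; the theorem needs a geometric proof.

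Your explanation of where the non-triviality hypothesis enters is also not correct. The extreme $v$-coefficients of $P_L$ are nonzero for \emph{every} link --- that is what ``extreme'' means --- including the unknot, where $P=1$. The hypothesis is not a non-degeneracy condition rescuing a polynomial estimate; it is needed for the statement to be true at all. For example, the closure of $\sigma_1\in B_2$ inside a round solid torus is a generalized $2$-cabling of the unknot $L$, yet its closure in $S^3$ is again the unknot, so $\beta(L')=1\neq 2=q\beta(L)$. In Williams' argument non-triviality is used geometrically: it guarantees that each torus $\bdy(L_i\times D^2)$ is essential in $S^3\setminus L'$ and rules out the configuration in which the solid torus encloses the braid axis, after which the essential tori can be arranged transverse to the disk fibration of a minimal axis for $L'$, forcing $L$ itself into braided position and yielding the count $\beta(L')\geq q\beta(L)$. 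This is also exactly why the paper needs the separate Lemma~\ref{lemma1}, with its additional positivity hypotheses, to handle the unknot companion --- the case that Theorem~\ref{Williams} genuinely excludes rather than merely finds harder to prove.
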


Now we consider the case where $L$ is a trivial knot.

\begin{lemma}\label{lemma1}
Let $L'$ be a generalized $q$-cabling of the unknot $L$, with $L$ given by a positive braid with $n$ strands, where $n>1$. Also, assume the knot inside $L$ is given by a positive braid. Then, $L'$ has braid index equal to $q$.
\end{lemma}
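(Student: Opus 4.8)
The plan is to compute the braid index of $L'$ directly, establishing both the upper bound $\beta(L') \le q$ and the lower bound $\beta(L') \ge q$. The upper bound should be the easy direction: since each fiber $D^2$ of the tubular neighborhood meets $L'$ transversely in exactly $q$ points and all strands are coherently oriented, a standard argument shows $L'$ can be braided with respect to the core of the solid torus $L \times D^2$, giving a braid on $q$ strands. Concretely, $L$ is the unknot, so its complement contains an unknotted solid torus whose core is a braid axis; pushing $L'$ into braid position about this axis expresses $L'$ as a closed $q$-braid, whence $\beta(L') \le q$.

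The main work is the lower bound $\beta(L') \ge q$. Here I would exploit the hypothesis that the knot inside $L$ is itself given by a \emph{positive} braid, and that $L$ sits as a positive braid closure with $n > 1$ strands. The natural tool is the HOMFLY polynomial together with the Morton--Franks--Williams inequality, which bounds the braid index from below in terms of the spread of the HOMFLY polynomial in the framing variable. For positive braids this inequality is frequently sharp, and there is a clean formula relating the braid index to the Seifert genus or to the writhe and Seifert-circle count of a positive diagram (via Bennequin-type equalities). \textbf{First} I would produce an explicit positive diagram for $L'$: since both the companion pattern (the unknot $L$ presented on $n$ strands) and the embedded knot are positive braids, the generalized cabling inherits a positive braid presentation, and I can read off its writhe $w$ and number of Seifert circles $s$. \textbf{Then} I would apply the fact that for a positive braid closure the Morton--Franks--Williams bound is attained, so that $\beta(L') = s - w + 1$ or an equivalent combinatorial expression, and verify this equals $q$.

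The step I expect to be the main obstacle is showing the lower bound is exactly $q$ and not larger --- in other words, confirming that the positive $q$-strand diagram coming from the cabling is already of minimal braid index. The delicate point is that $L$ is the \emph{unknot}: Williams' theorem in its original form (Theorem~\ref{Williams}) explicitly excludes trivial components precisely because multiplicativity $\beta(L') = q\beta(L)$ would predict $\beta(L') = q \cdot 1 = q$, but the proof technique there relies on $L$ being knotted to control the companion torus, so the argument must be redone. I would instead argue that the solid torus $L \times D^2$, being unknotted, provides a genuine braid axis for $L'$ on $q$ strands, and then show no braiding on fewer than $q$ strands exists. The cleanest route is to combine the upper bound above with the Morton--Franks--Williams lower bound computed from the positive diagram; the positivity hypotheses on both the ambient and the internal braid are exactly what guarantee sharpness and force the genus/writhe computation to yield $q$. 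The technical heart is therefore the diagrammatic genus or HOMFLY-spread computation, which I would carry out for the explicit positive presentation of $L'$ rather than in the abstract.
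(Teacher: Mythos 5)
Your upper bound $\beta(L') \le q$ is fine, but the lower bound contains a genuine gap: the claim that ``for a positive braid closure the Morton--Franks--Williams bound is attained'' at the strand number of the given positive presentation is simply false. The closure of the positive braid $\sigma_1$ on two strands is the unknot, whose braid index is $1$, not $2$; likewise $\sigma_1\sigma_2$ on three strands closes to the unknot. Positivity of a braid presentation alone never forces the MFW bound to equal its strand count --- what does force it is the presence of a \emph{full twist} on all strands (this is exactly Franks--Williams, Corollary 2.4, the result the paper invokes). A symptom of the problem is that your argument nowhere uses the hypothesis $n>1$: if your reasoning were valid it would apply verbatim when $L$ is the trivial one-strand braid, where the conclusion is false (take the pattern $\sigma_1$ with $q=2$; the resulting $L'$ is the unknot, so $\beta(L')=1\neq q$). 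Since the statement fails for $n=1$, any correct proof must use $n>1$ in an essential way, and yours does not. Also, as a side remark, the formula $\beta(L') = s - w + 1$ is not a correct expression for any of the relevant bounds; for a positive braid with many crossings it is negative.

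The missing idea, which is the heart of the paper's proof, is precisely how the hypothesis $n>1$ manufactures the full twist needed for sharpness. The paper takes the positive $n$-strand braid $B$ presenting the unknot $L$ and reduces it to the trivial one-strand braid by Markov moves, tracking the effect on the cable: type I moves (conjugation) do not change the pattern inside the solid torus, while each type II move (destabilization) inserts one \emph{positive full twist} on the $q$ cable strands. Since $n>1$, at least one destabilization must occur, so after the reduction $L'$ is presented as a positive braid on $q$ strands containing at least one positive full twist; Franks--Williams Corollary 2.4 then gives $\beta(L') = q$ exactly. Your proposal defers its technical heart (``the diagrammatic genus or HOMFLY-spread computation'') to an unspecified calculation, but that calculation cannot succeed in the generality claimed, because the sharpness principle it rests on is false without the full twist.
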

\begin{figure}
\includegraphics[scale=0.58]{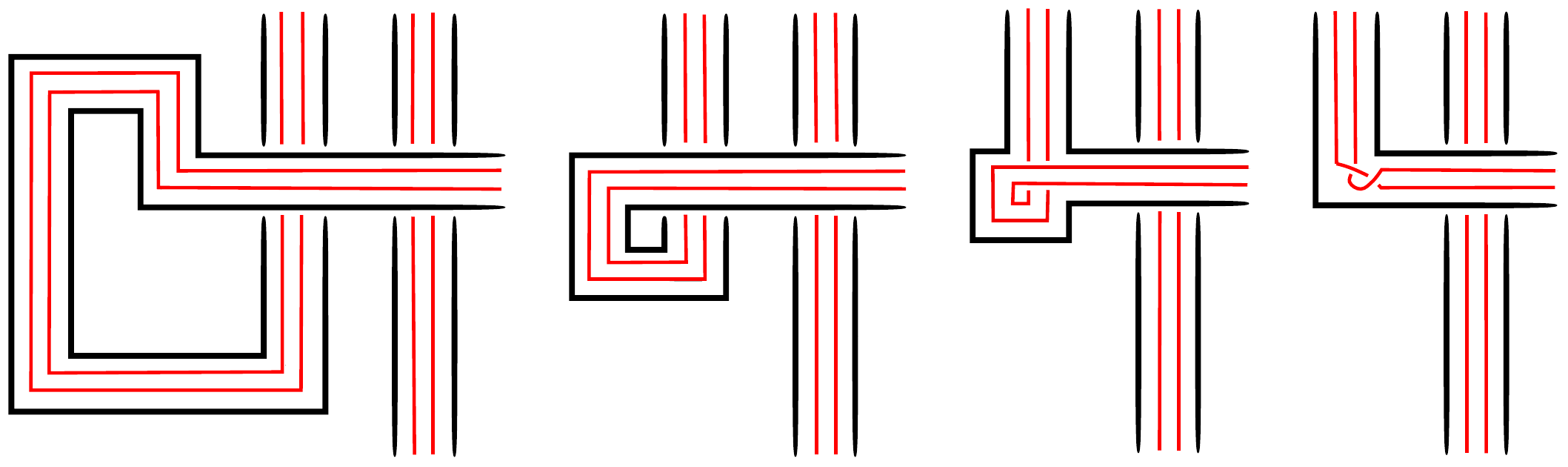} 
\caption{This series of drawings illustrate how the generalized 2-cabling $L'$ obtains one positive full twist after removing the crossing $\sigma_1$ from $L$. In this case we remove the first crossing of the braid $B$ of $L$. Similar ideas can be applied to see that $L'$ also obtains one positive full twist when we remove the last crossing of $B$ using the type II move. }
\label{trivial}
\end{figure}
\begin{proof} 
Denote by $B$ the positive braid of $L$. The Markov theorem says that we can apply two types of moves(type I and  type II)  to transform $B$ into the trivial braid with one strand \cite{Birman}. 
The type I  move geometrically sends crossings around the braid closure  to the top or bottom of the braid. Thus, the type I move doesn't change the knot inside $L$. The type II move removes strands which bound discs by shrinking and undoing the circles that they form. In addition, we see that the type II move adds a positive full twist to the knot inside $L$ as explained in Figure~\ref{trivial}. Since the braid $B$ has more than one strand, we need to apply at least one type II move to transform $B$ into the trivial braid with one strand. After all these Markov moves, the knot $L'$ is given by a positive braid with $q$ strands and at least one positive full twist on the $q$ strands. Now it follows from [\cite{Franks}, corollary 2.4] that the knot $L'$ has braid index equal to $q$.
\end{proof} 

\section{Half Twists: the ``generators" of the braid group}

In this section we define \emph{half twist} as in [\cite{dePaiva:Unexpected}, section 3]. We'll use them to calculate the braid indexes of some braids in the next section.

\begin{definition} 
Let $J$ be an unknot bounding a disc $D$ which transversely intersects a set of $j$ parallel straight segments in a plane
$Y$. A \emph{positive half twist along $J$} is obtained by the following
procedure: cut along $D$ into two discs $D_1$ and $D_2$, where $D_1$, $D_2$ is the top, bottom disc, respectively. Then, rotate $D_1$ by $180^\circ$ degrees in the clockwise direction and then glue it back to $D_2$. A \emph{negative half twist along $J$} is defined similarly, only the rotation is in the anti-clockwise direction. 
\end{definition}

A positive half twist transforms the trivial braid with $j$ strands into the braid
$$(\sigma_1\sigma_2\dots \sigma_{j - 2}\sigma_{j - 1})(\sigma_1\sigma_2\dots \sigma_{j - 3}\sigma_{j - 2})\dots (\sigma_1\sigma_2)(\sigma_1).$$
On the other hand, a negative half twist transforms the $j$ parallel straight segments into the braid 
$$(\sigma_{j - 1}^{-1}\sigma_{j - 2}^{-1} \dots \sigma_{2}^{-1}\sigma_{1}^{-1})(\sigma_{j - 1}^{-1}\sigma_{j - 2}^{-1} \dots \sigma_{2}^{-1})\dots (\sigma_{j - 1}^{-1}\sigma_{j - 2}^{-1})(\sigma_{j - 1}^{-1}),$$
where $\sigma_{i}^{-1}$ is the inverse of $\sigma_{i}$ in the braid group $B_j$.

We know that the $(i, j, (j-i+1)r)$-torus braid is obtained by Dehn filling along a circle encircling $j-i+1$ strands. The next lemma says that when the $(i, j, r)$-torus braid is not obtained by full twists, we need to use half twists in order to obtain it.

\begin{lemma}\label{half-twist}
The $(i, j, r)$-torus braid is obtained by full and half twists along three circles  as follows: start with the trivial braid $B$ with $j$ strands. Then, denote by $J_{i,j}$ the unknot encircling from the $i$ strand to the $j$ strand of the braid $B$. Consider $t$ an integer such that $0<t<j-i+1$ and $r=t+k(j-i+1)$ for some non-negative integer $k$. Further, augment the braid by placing unknots $J_{i, j-t}$ and $J_{j-t+1, j}$ into the trivial braid, where $J_{i, j-t}$, $J_{j-t+1, j}$ is enclosing from the strand $i$, $j-t+1$  to the strand $j-t$, $j$, respectively. Perform a positive half twist along $J_{i,j}$, a negative half twist along $J_{i, j-t}$, and a positive half twist along $J_{j-t+1, j}$. Finally, perform $(1/k)$-Dehn filling along $J_{i,j}$ and followed by $(1/0)$-Dehn fillings along each of $J_{i, j-t}$ and $J_{j-t+1, j}$ to remove them from the diagram.
\end{lemma}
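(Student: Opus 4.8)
The plan is to convert the entire geometric recipe into a single computation in the braid group $B_j$, so that the statement reduces to one positive-word identity. Throughout, write $\delta_{i,j}=\sigma_i\sigma_{i+1}\cdots\sigma_{j-1}$, so that $B_{i,j}^{r}=\delta_{i,j}^{\,r}$, and let $\Delta_{a,b}$ denote the positive half-twist braid on the strands $a,a+1,\dots,b$, i.e. the word $(\sigma_a\cdots\sigma_{b-1})(\sigma_a\cdots\sigma_{b-2})\cdots(\sigma_a)$ produced by a positive half twist as recorded in this section. Since every circle and every twist in the construction is supported on the strands $i,\dots,j$, the strands $1,\dots,i-1$ are passive and I may assume $i=1$ to lighten notation; I set $m:=j-i+1$ for the number of twisted strands.

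First I would record what each move contributes. By the explicit formulas for half twists stated above, the positive half twist along $J_{i,j}$ contributes the factor $\Delta_{i,j}$, the negative half twist along $J_{i,j-t}$ contributes $\Delta_{i,j-t}^{-1}$, and the positive half twist along $J_{j-t+1,j}$ contributes $\Delta_{j-t+1,j}$. Reading the braid from top to bottom in the order the moves are performed, the word produced by the three half twists is
\[
W_t=\Delta_{i,j}\,\Delta_{i,j-t}^{-1}\,\Delta_{j-t+1,j}.
\]
Because $J_{i,j-t}$ and $J_{j-t+1,j}$ encircle disjoint blocks of strands, the last two factors commute, so their relative height is immaterial and $W_t$ is well defined. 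Next I would use the standard fact that $J_{i,j}$ is an unknot meeting the $m$ strands in a single disc, so that $1/k$--Dehn filling along it inserts exactly $k$ positive full twists on those strands, i.e. multiplies the braid by $\Delta_{i,j}^{2k}=\delta_{i,j}^{\,km}=B_{i,j}^{\,km}$; the two $1/0$--fillings are trivial and merely delete $J_{i,j-t}$ and $J_{j-t+1,j}$ without adding any crossings.

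Granting the key identity
\[
\Delta_{i,j}\,\Delta_{i,j-t}^{-1}\,\Delta_{j-t+1,j}=\delta_{i,j}^{\,t}=B_{i,j}^{\,t},
\]
the proof then concludes: the final braid is $W_t\cdot B_{i,j}^{\,km}=B_{i,j}^{\,t}\cdot B_{i,j}^{\,km}=B_{i,j}^{\,t+km}=B_{i,j}^{\,r}$, which is the $(i,j,r)$--torus braid, as required. The heart of the lemma is therefore this identity, and proving it is the step I expect to be the main obstacle. I would establish it by induction on $t$, with the trivial base case $t=0$ (where $\Delta_{i,j-0}^{-1}=\Delta_{i,j}^{-1}$ and $\Delta_{j+1,j}$ is empty, giving $W_0=1=\delta_{i,j}^{\,0}$). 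For the inductive step it suffices to check $W_{t+1}=W_t\,\delta_{i,j}$ as braid words; after cancelling the leading $\Delta_{i,j}$ and using the Garside recursions $\Delta_{i,c}=(\sigma_i\cdots\sigma_{c-1})\,\Delta_{i,c-1}$ and $\Delta_{c,j}=(\sigma_{j-1}\cdots\sigma_c)\,\Delta_{c+1,j}$ together with far commutativity of generators whose indices differ by at least two, this collapses to a positive-word relation provable by repeated application of the braid relation $\sigma_p\sigma_{p+1}\sigma_p=\sigma_{p+1}\sigma_p\sigma_{p+1}$.

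Geometrically the identity expresses the fact that a rigid $\pi$--rotation of the whole disc of $m$ strands can be rebuilt from the partial shift $\delta_{i,j}^{\,t}$ together with opposite half twists on the two sub-blocks; keeping track of the signs and of the order in which the three discs are stacked is the delicate bookkeeping, and a wrong ordering would produce $B_{i,j}^{\,t}$ conjugated, or with the wrong exponent distributed on a sub-block. A secondary point to verify is that performing a half twist along $J_{i,j}$ leaves it an unknotted circle still meeting the strands in a single disc, so that the subsequent $1/k$--filling genuinely realises $k$ positive full twists; this is clear from the cut--rotate--reglue description, since the disc is reassembled with its boundary intact.
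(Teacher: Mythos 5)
Your proof is correct, and its top-level decomposition is the same as the paper's: the three half twists compose to $B_{i,j}^{\,t}$, the $(1/k)$-filling along $J_{i,j}$ inserts the central full twists $B_{i,j}^{\,k(j-i+1)}$, and the $(1/0)$-fillings just delete the two auxiliary circles. The genuine difference lies in how the core identity $\Delta_{i,j}\,\Delta_{i,j-t}^{-1}\,\Delta_{j-t+1,j}=\delta_{i,j}^{\,t}$ is established. The paper proves it geometrically: it slides $J_{i,j-t}$ below the half-twist braid created by $J_{i,j}$ and lets Figure~\ref{HalfTwistBraid} carry the argument that the negative half twist cancels the positive half twist on the first $j-t-i+1$ strands, with the half twist along $J_{j-t+1,j}$ then completing the word to $\delta_{i,j}^{\,t}$; no algebra is written down. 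You replace the pictures with an induction in Garside combinatorics, and your sketched step does close: taking $i=1$, cancelling the leading $\Delta_{1,j}$ and using $\Delta_{1,j-t}\Delta_{1,j-t-1}^{-1}=\sigma_1\cdots\sigma_{j-t-1}$, the claim $W_{t+1}=W_t\,\delta_{1,j}$ reduces to
\[
\sigma_1\cdots\sigma_{j-t-1}\,\Delta_{j-t,j}=\Delta_{j-t+1,j}\,\sigma_1\cdots\sigma_{j-1},
\]
which follows at once from the right-handed recursion $\Delta_{j-t,j}=\Delta_{j-t+1,j}(\sigma_{j-t}\cdots\sigma_{j-1})$ together with far commutativity of $\sigma_1,\dots,\sigma_{j-t-1}$ past $\Delta_{j-t+1,j}$ (with this form of the recursion no genuine braid relations are needed). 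Two points you use silently deserve a line each: the paper's displayed word for a negative half twist is the reversed word $(\sigma_{j-1}^{-1}\cdots\sigma_1^{-1})\cdots(\sigma_{j-1}^{-1})$, so identifying it with $\Delta_{1,j}^{-1}$ uses the flip-invariance of the half twist; and it is the centrality of $\Delta_{i,j}^{2}$ in the braid group on those strands that lets you place the Dehn-filling contribution at the end of the word regardless of where the filling circle sits. As for what each route buys: yours is self-contained, makes signs and stacking order explicit, and is checkable without any figure; the paper's makes the cancellation visible in exactly the form (circles sliding around a closed braid) in which the lemma is subsequently applied, in the proof of Proposition~\ref{proposition10}.
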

 
\begin{proof}We start by applying a positive half twist along $J_{i,j}$ to obtain the braid
\[(\sigma_i\sigma_{i+1}\dots\sigma_{j-1})(\sigma_i\dots\sigma_{j-2})\dots(\sigma_i),\]
encircled by $J_{i,j}$. We push $J_{i,j}$ to lie above the braid. 
A negative half twist along $J_{i, j-t}$ yields the braid
\[(\sigma_{j-t-1}^{-1}\dots\sigma_{i+1}^{-1}\sigma_i^{-1})(\sigma_{j-t-1}^{-1}\dots\sigma_{i+1}^{-1})\dots(\sigma_{j-t-1}^{-1}).\]
Then, we slide the component $J_{i, j-t}$ to be below the braid of the resulting half twist. 
This negative half twist along $J_{i, j-t}$ is cancelled with the positive half twist along the first $j-t-i+1$ strands of the first braid, as illustrated in the first and second drawings of Figure \ref{HalfTwistBraid}.
Finally, the positive half twist along $J_{j-t+1, j}$ concatenates a positive half twist along the last $j-(j-t+1)+1$ strands, giving the braid
\[(\sigma_i\dots\sigma_{j-1})^{t},\]
as shown in the second and third drawings of Figure \ref{HalfTwistBraid}.
\begin{figure}
\includegraphics[scale=0.33]{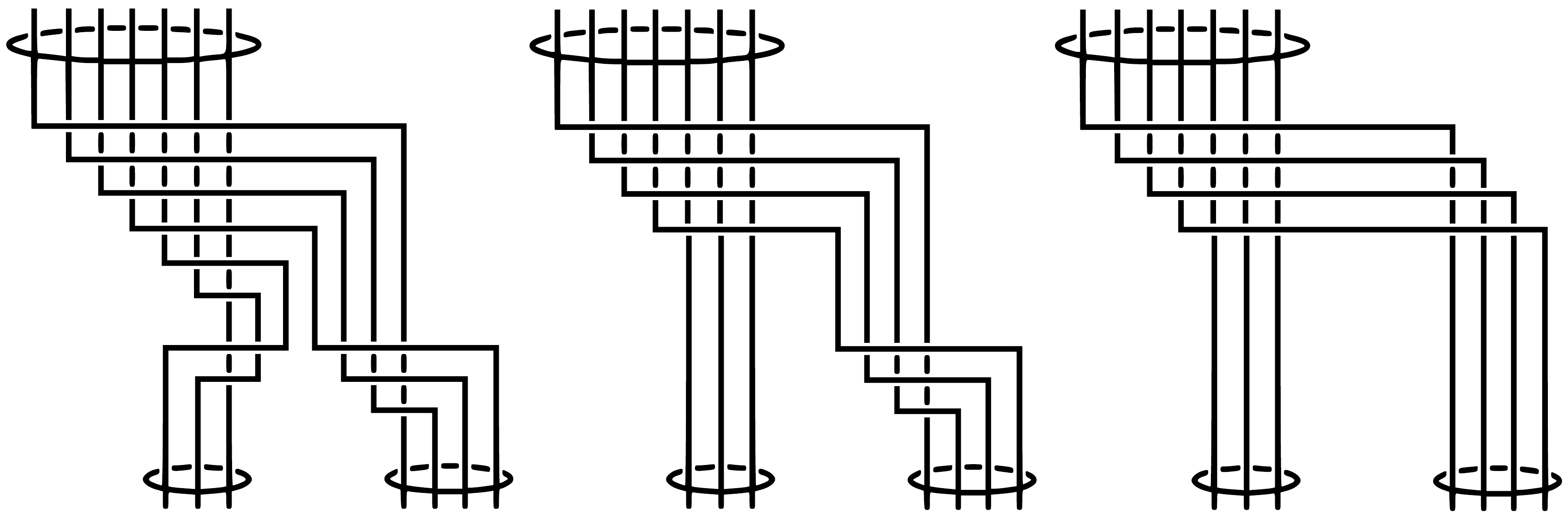} 
\caption{These drawings illustrate the procedure described in the proof of lemma~\ref{half-twist} to obtain the $(1, 7, 4)$-torus braid by half twists. 
We start with the trivial braid on 7 strands together with the circles $J_{1, 7}$, $J_{1, 3}$, and $J_{4, 7}$, where  $J_{1, 7}$ is encircling all 7 strands and $J_{1, 3}, J_{4, 7}$ is encircling the first 3, last 4 strands, 
respectively, with $J_{1, 7}$ above $J_{1, 3}$ and $J_{4, 7}$. Then, we apply a positive half twist along each $J_{1, 7}$ and $J_{4, 7}$ and a negative half twist along $J_{1, 3}$ to obtain the leftmost drawing. The negative half twist along $J_{1, 3}$ is cancelled with a  positive half twist along just the first $3$ strands as shown in the second drawing. Finally, the positive half twist along $J_{4, 7}$ joins the rest of the positive half twist along $J_{1, 7}$ to give the $(1, 7, 4)$-torus braid as illustrated by the last drawing.}
\label{HalfTwistBraid}
\end{figure}

Now we perform $(1/k)$-Dehn filling along $J_{i,j}$ to add an additional $k(j-i+1)$ overstrands($k$ full twists) into the braid to obtain the desired braid $B_{i, j}^r$, although it is still augmented by the unknots $J_{i, j-t}$ and $J_{j-t+1, j}$. To remove these additional components, we perform $(1/0)$-Dehn filling along each. 
\end{proof}

\section{Hyperbolic knots given by positive braids}

In this section we prove theorem~\ref{theorem1}.

\begin{proposition}\label{proposition1}
Let $K$ be a knot given by a positive braid $B$ with $p$ strands. Suppose that $B$ has one $(1, p, q + pk)$-torus braid, where $p, q$  are positive coprime integers with $p>q$ and $q, k\geq 2$. Also, assume that the braid $B(\sigma_{p-1}^{-1}\dots \sigma_1^{-1})^{kp}$ has braid index equal to $q$. Then, $S^3-K$ has no essential tori, and therefore $K$ is not a satellite knot.
\end{proposition}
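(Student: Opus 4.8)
The plan is to argue by contradiction and reduce the nonexistence of an essential torus to a divisibility statement ruled out by $\gcd(p,q)=1$. Suppose $T\subset S^3-K$ is an essential torus, and let $A$ be the braid axis, so that $K=\hat B$ is a closed braid winding $p$ times around $A$. The first and most delicate step is to control the position of $T$ relative to $A$. Since $B$ is positive and contains $k\ge 2$ full twists $\Delta^{2}=(\sigma_1\cdots\sigma_{p-1})^{p}$, its fractional Dehn twist coefficient is at least $k\ge 2$, so I would invoke Ito's theorem on essential surfaces relative to the braid axis \cite{Ito} to isotope $T$ to a standard braided position. The intended outcome is that $T=\partial V$ bounds a braided companion solid torus $V$ whose core is a closed braid $\hat\gamma$ on $w$ strands about $A$, with $K\subset V$ meeting each meridian disk of $V$ in exactly $c$ points and $p=wc$. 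Because $B$ is positive, all strands of $K$ are coherently oriented inside $V$, so $K$ is a generalized $c$-cabling of $\hat\gamma$; and since $T$ is essential, $K$ is not isotopic to the core of $V$, forcing $c\ge 2$.

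Next I would record braid indices. As $B$ is a positive braid carrying a full twist on all $p$ strands, the Morton--Franks--Williams bound [\cite{Franks}, Corollary 2.4] yields $\beta(K)=p$, just as in the proof of Lemma~\ref{lemma1}. If $\hat\gamma$ is nontrivial, Theorem~\ref{Williams} gives $p=\beta(K)=c\,\beta(\hat\gamma)$, so $c\mid p$; if instead $\hat\gamma$ is the unknot, Lemma~\ref{lemma1} applies (the companion and pattern coming from positive braids) and again returns $c\mid p$. I would then pass to $B'=B(\sigma_{p-1}^{-1}\cdots\sigma_1^{-1})^{kp}=B\Delta^{-2k}$, the braid obtained by deleting the $k$ full twists. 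Grouping the $p=wc$ strands into $w$ cables of $c$ strands, the cabling formula $\Delta_p^{2}=(\text{cabled }\Delta_w^{2})\cdot\prod_{i=1}^{w}\Delta_c^{2}$ shows that removing these full twists changes the companion $\hat\gamma$ and the pattern only by full twists, and in particular preserves the winding number $c$. Hence $\hat{B'}$ is again a generalized $c$-cabling, and Theorem~\ref{Williams} (or Lemma~\ref{lemma1}) combined with the hypothesis $\beta(\hat{B'})=q$ gives $c\mid q$. Then $c\mid\gcd(p,q)=1$ contradicts $c\ge 2$, so no essential torus exists and $K$ is not a satellite.

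The hard part will be the first step together with the persistence of the cabling number across the full twists. Making Ito's positioning precise enough to realise $T$ simultaneously as a braided companion torus and compatibly with the full-twist region --- so that the cabling formula genuinely applies after the isotopy --- is where the hypothesis of two full twists should do its work, by excluding the Birman--Menasco configurations in which $T$ interacts pathologically with the axis. A secondary difficulty is the divisibility $c\mid q$: I must ensure the companion of $\hat{B'}$ is still nontrivial (so Theorem~\ref{Williams} applies) or else is handled by Lemma~\ref{lemma1}; the surviving $(1,p,q)$-torus braid inside $B'$, which closes to the nontrivial torus link $T(p,q)$, is what I would use to keep the reduced companion nontrivial. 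Once these positional and divisibility points are secured, the arithmetic contradiction from $\gcd(p,q)=1$ is immediate.
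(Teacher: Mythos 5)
Your proposal follows the same skeleton as the paper's proof: use Ito \cite{Ito} (this is where the hypothesis of two full twists enters) to push the essential torus $T$ off the braid axis, so that $K$ becomes a generalized $c$-cabling of a companion closed braid with $c\geq 2$ and $c\mid p$; pass to the braid $B(\sigma_{p-1}^{-1}\cdots\sigma_1^{-1})^{kp}$; apply Theorem~\ref{Williams} when the new companion is knotted and Lemma~\ref{lemma1} when it is trivial to conclude $c\mid q$ from the braid-index hypothesis; and contradict $\gcd(p,q)=1$. The one step where you diverge is exactly the step you flag as unresolved: why the generalized $c$-cabling structure, with the same $c$, persists after the $k$ full twists are deleted. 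Your mechanism --- grouping the $p$ strands into $w$ cables of $c$ strands and invoking the cabling formula for the full twist --- presupposes that the braided position of the companion solid torus is compatible with the full-twist subword of $B$, and as written this compatibility is asserted rather than proved. That is a genuine gap, and it is the crux: Ito's theorem isotopes $T$, not the braid word, so nothing guarantees the tubes run in cabled position through the twist region.

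The paper closes this gap with a move that makes the compatibility question evaporate: perform $(-1/k)$-Dehn surgery along the braid axis $C$. This surgery is a homeomorphism from the complement of $C$ to the complement of the dual unknot in the new copy of $S^3$; it carries the closed braid of $B$ to the closure of $B(\sigma_{p-1}^{-1}\cdots\sigma_1^{-1})^{kp}$, and --- precisely because $T$ is disjoint from $C$ --- it carries $T$, the solid torus it bounds, its meridian-disk fibration, and $K$ inside it, all intact. Hence the new closed braid is a generalized $c$-cabling of the image companion with exactly the same $c$, and no interaction between the tubes and the twist region ever needs to be analyzed. In particular, the hypothesis $k\geq 2$ is needed only to invoke Ito's theorem, not (as you suggest) to rule out pathological positions of $T$ relative to the full twists. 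Your algebraic route could in principle be repaired --- conjugate $B$ to a genuinely cabled braid word and use centrality of the full twist so that its deletion commutes with the conjugation --- but the surgery argument is shorter and is what the paper does. Also note that your worry about keeping the reduced companion nontrivial is unnecessary: the companion after surgery may well be trivial or knotted, and the paper simply splits into these two cases (Theorem~\ref{Williams} versus Lemma~\ref{lemma1}), each giving $c\mid q$.
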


\begin{proof}Consider that the complement of $K$ in $S^3$ has an essential torus $T$. Since $B$ has at least two positive full twists on $p$ strands, by Ito [\cite{Ito}, theorem 1.2(3)], $T$ doesn't intersect the braid axis $C$ of $B$ and the knot inside $T$ is given by a braid. So, $K$ is a generalized $b$-cabling of the knot $L$, where $L$ is the core of the solid torus bounded by $T$, with $b>1$. Thus, $b$ must divide $p$. 
After $(-1/k)$-Dehn surgery along $C$, the knot  $K$ becomes the knot $K'$ given by the closure of the braid $B(\sigma_{p-1}^{-1}\dots \sigma_1^{-1})^{kp}$ and the torus $T$ becomes a new torus $T'$ which doesn't intersect the braid axis of the last braid. The torus $T'$ is trivial or knotted. If $T'$ is knotted, then, by theorem~\ref{Williams}, $B(\sigma_{p-1}^{-1}\dots \sigma_1^{-1})^{kp}$ has braid index equal to $b\beta(T')$, where $\beta(T')$ is the braid index of the core of the solid torus bounded by $T'$. 
Thus, since the braid $B(\sigma_{p-1}^{-1}\dots \sigma_1^{-1})^{kp}$ has braid index equal to $q$, we have that $q = b\beta(T')$, which is a contradiction since $p$ and $q$ are coprime. If $T'$ is trivial, then, by lemma~\ref{lemma1}, $B(\sigma_{p-1}^{-1}\dots \sigma_1^{-1})^{kp}$ has braid index equal to $b$. Thus, $q = b$, a contradiction for the same reason. So, $T$ does not exist. Therefore, $S^3-K$ has no essential tori.
\end{proof}
 
\begin{corollary}
Let $K$ be a knot given by a positive braid $B$ with $p$ strands. Suppose that $B$ has one $(1, p, q + pk)$-torus braid, where $p, q$  are positive coprime integers with $p>q\geq 2$. Also, assume that the braid $B(\sigma_{p-1}^{-1}\dots \sigma_1^{-1})^{kp}$ has braid index equal to $q$. Then, for  $k = 1, 0$, if $S^3-K$ has an essential torus, it intersects the braid axis of the braid $B$. 
\end{corollary}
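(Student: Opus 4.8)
The plan is to prove the contrapositive and to re-use the argument of Proposition~\ref{proposition1} verbatim from the moment that disjointness of the torus and the axis becomes available. Concretely, I would suppose that $S^3 - K$ contains an essential torus $T$ disjoint from the braid axis $C$, and derive a contradiction; this shows that every essential torus must meet $C$. The only place where the hypothesis $k \geq 2$ entered the proof of Proposition~\ref{proposition1} was the appeal to Ito [\cite{Ito}, Theorem 1.2(3)], which forced $T$ off the axis. For $k = 0, 1$ that input is unavailable, which is exactly why the conclusion weakens to ``if $T$ exists, it meets $C$'': we can no longer exclude tori that intersect $C$, but we can still treat those disjoint from it.

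First I would record the satellite picture. Since $T$ is disjoint from $C$, it lies in the solid torus $V = S^3 - N(C)$ carrying the closed braid $K$. After discarding the compressible and $\partial$-parallel possibilities (which are not essential in $S^3 - K$), the torus $T$ bounds a solid torus $W \subset V$ containing $K$, whose core $L$ is the companion, and the axis $C$ lies outside $W$. Here I would invoke the Birman--Menasco classification of essential tori in closed braid complements \cite{positions}, which is insensitive to the amount of twisting: a torus disjoint from the axis can be isotoped, keeping it off the axis, so that $L$ is a closed braid and $K$ is a genuine generalized $b$-cabling of $L$. Comparing linking numbers with $C$ then gives $p = b\,\mathrm{lk}(L, C)$, so $b \mid p$, and essentiality forces $b > 1$.

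Next I would run the surgery exactly as before. Performing $(-1/k)$-Dehn surgery on $C$ removes the $k$ full twists and turns $K$ into the closure $K'$ of $B(\sigma_{p-1}^{-1}\cdots\sigma_1^{-1})^{kp}$; for $k = 0$ this surgery is trivial and $K' = K$, while for $k = 1$ it deletes a single full twist. Since $T$ is disjoint from the surgery curve $C$, it is unaffected and becomes a torus $T'$ disjoint from the new axis, and $K'$ remains a generalized $b$-cabling of the (possibly re-knotted) core $L'$, with the same degree $b$. Now I split into the two cases of Proposition~\ref{proposition1}: if $L'$ is knotted, Theorem~\ref{Williams} gives $\beta(K') = b\,\beta(L')$; if $L'$ is unknotted, Lemma~\ref{lemma1} gives $\beta(K') = b$ (using that removing full twists only lowers the exponent of the torus braid from $q + pk$ to $q$, so $K'$, $L'$, and the knot inside $L'$ remain positive braids). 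In either case $\beta(K') = q$ by hypothesis forces $b \mid q$; together with $b \mid p$ and $b > 1$ this contradicts $\gcd(p, q) = 1$.

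The main obstacle is the second paragraph: supplying the generalized cabling structure from disjointness alone. For $k \geq 2$ both the disjointness of $T$ from $C$ and the braidedness of the companion were handed to us by Ito; here I must instead argue that an essential torus off the axis is already a braided (generalized-cabling) torus, independently of the twisting, which is precisely where the Birman--Menasco theory of closed braids \cite{positions} enters. Once that normal form is in place, the surgery, Theorem~\ref{Williams}, Lemma~\ref{lemma1}, and the coprimality contradiction are identical to Proposition~\ref{proposition1}, and the contrapositive yields the statement.
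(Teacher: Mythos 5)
The heart of your argument is its second paragraph, and that is where there is a genuine gap. For $k=0,1$ you need that an essential torus $T$ disjoint from the braid axis $C$ already carries the generalized cabling structure: the companion $L$ is a closed braid, $C$ lies outside the companion solid torus, and $K$ meets every meridian disk of that solid torus transversely in $b$ points, all going the same way. You attribute this to Birman--Menasco \cite{positions}, asserting it is ``insensitive to the amount of twisting.'' That is not what \cite{positions} proves. Birman--Menasco give a list of standard positions for essential tori in a closed braid complement, up to isotopy in $S^3\setminus K$; those standard positions include tori that intersect the axis, and the isotopy carrying a given torus to standard position is free to cross the axis. The content of Ito's theorem [\cite{Ito}, theorem 1.2(3)], and the reason Proposition~\ref{proposition1} assumes $k\geq 2$, is precisely that two full twists (Dehornoy floor at least $2$) exclude every position except the braided one; for $k=0,1$ this is exactly what cannot be done. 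Worse, the statement you want is false as a general fact about closed braids: take a satellite whose pattern has wrapping number strictly larger than its winding number (say winding $2$, wrapping $4$), with companion a trefoil and companion solid torus $W$ a neighbourhood of the closed $2$-braid. In that position the satellite is not a closed braid, because of the backward arcs; making them transverse by swinging them the long way around the axis, dragging a tube of $W$ along, yields a closed braid whose essential companion torus encircles but misses the axis. Yet no isotopy of that torus in the knot complement can ever produce a generalized cabling, since any such isotopy extends to an ambient isotopy fixing $K$ and hence preserves the wrapping number, which exceeds the winding number. So disjointness from the axis alone never supplies the cabling structure, and without it your appeals to Theorem~\ref{Williams} and Lemma~\ref{lemma1} after the $(-1/k)$-surgery have nothing to act on.

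The paper's proof goes in the opposite direction and thereby avoids this issue entirely: instead of removing the $k$ twists, it adds twists. Since $T$ misses $C$, perform $(-1/k')$-Dehn surgery along $C$ with $k'$ large; the torus survives, and for at least one (indeed all but finitely many) $k'>2$ it remains essential in the complement of the closure of $BB_{1,p}^{k'p}$ --- persistence of essentiality under all but finitely many fillings along a curve disjoint from the torus is a standard surgery fact, and is the only outside input needed. The new braid contains the torus braid $B_{1,p}^{q+(k+k')p}$ with $k+k'\geq 2$, and the braid-index hypothesis is untouched because full twists are central, so $(BB_{1,p}^{k'p})(\sigma_{p-1}^{-1}\dots\sigma_1^{-1})^{(k+k')p}=B(\sigma_{p-1}^{-1}\dots\sigma_1^{-1})^{kp}$, which has braid index $q$ by assumption. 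Proposition~\ref{proposition1} applied to this twisted-up knot then gives the contradiction. In short, the structure theory for axis-disjoint tori must be imported \emph{after} twisting up by a surgery the hypothesized torus cannot see, rather than reconstructed at low twisting, where it simply is not available; as written, your proof cannot be repaired by citing \cite{positions}.
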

\begin{proof}
Suppose $S^3-K$ has an essential torus $T$ that does not intersect the braid axis $C$ of $B$. Then, by doing Dehn surgery along $C$ with large slope, we can assume that the closure of the braid $BB_{1, p}^{k'p}$ has an essential torus for at least one $k'>2$, which is a contradiction with the last proposition. 
\end{proof}

The next lemma was proved by Los [\cite{Los}, corollary 1.2]]. We'll use it in the next proposition and in corollary~\ref{negativeTTT}.
 
\begin{lemma}\label{Los}
Consider $\beta_1$, $\beta_2$ two braids which have minimal braid index and are representations of the same torus
knot. Suppose that these two closed braids travel around the same braid axis $C$. Then, there is an isotopy in the complement of $C$ that takes $\beta_1$ to $\beta_2$.
\end{lemma}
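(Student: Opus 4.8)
The plan is to recognize each closed braid as the monodromy of a fibration of the link complement $M = S^3 \setminus (C \cup K)$ and to invoke uniqueness of such fibrations within a fixed cohomology class. Write $V = S^3 \setminus \mathrm{int}\, N(C)$ for the solid torus bounded by the fixed axis; since $\beta_1$ and $\beta_2$ are minimal braid index representatives of the same torus knot $K$ around the same axis $C$, both lie in $\mathrm{int}\, V$ as closed $q$-braids, where $q$ is the braid index of $K$. First I would record that the meridian discs of $V$ foliate $V$ and meet $K$ transversely in $q$ points each; puncturing these discs along $K$ exhibits $M$ as a surface bundle over $S^1$ with fiber the $q$-punctured disc $D_q$ and monodromy $\beta_i \in \mathrm{MCG}(D_q) \cong B_q$. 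The key point is that both fibrations represent the same class $\alpha \in H^1(M;\ZZ)$, the class measuring the winding around $C$, because this class is determined by the solid-torus structure about the fixed axis alone and not by the particular braid.

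Next I would appeal to the fact that a fibered $3$-manifold admits, up to isotopy, a unique fibration in each primitive fibered cohomology class (Thurston's fibered-cone theory, in the relative form appropriate to manifolds with boundary). Applied to $M$ and the common class $\alpha$, this produces an isotopy of $M$ carrying the first fibration to the second and, in particular, conjugating the monodromies: $\beta_2 = \gamma \beta_1 \gamma^{-1}$ for some $\gamma \in B_q$. Alternatively, the same conclusion follows by a dynamical argument in the spirit of this section: because $K$ is a torus knot its complement is Seifert fibered, so the monodromy $\beta_i$ is periodic (it is neither reducible nor pseudo-Anosov, as either would contradict the Seifert fibered geometry together with minimality of the axis), and a periodic mapping class of $D_q$ with a prescribed rotation number is unique up to conjugacy by rigidity of finite-order disc homeomorphisms.

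Finally I would convert the conjugacy into the claimed geometric isotopy. Conjugation of closed braids is realized by an ambient isotopy of $V$ fixing $\bdy V$ pointwise, by sliding the word $\gamma$ around the closure, so the closures of $\beta_1$ and $\beta_2$ are isotopic inside $V \subset S^3 \setminus C$. This is precisely an isotopy in the complement of $C$ taking $\beta_1$ to $\beta_2$, as required.

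I expect the main obstacle to be the two middle steps: justifying that the relevant class $\alpha$ lies in the interior of a fibered face, so that fibration uniqueness applies, and, along the dynamical route, correctly reading off the Nielsen-Thurston type of the monodromy from the torus-knot data. The subtle content is that minimality of the braid index forces the axis to be the one compatible with the Seifert fibration, which is exactly what guarantees periodicity and the ensuing rigidity; this is where Los's analysis relating the dynamical type of a braid to the geometric type of its closure does the real work.
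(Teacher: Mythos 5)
The paper never proves this lemma itself: it is quoted directly as Corollary 1.2 of Los's paper, followed only by a heuristic remark about Markov moves. Judged as an independent proof attempt, your primary route has a genuine gap. You treat $\beta_1$ and $\beta_2$ as two fibrations of a single manifold $M = S^3 \setminus (C \cup K)$ representing a common cohomology class, and then invoke uniqueness of fibrations within a class. But there is no single manifold $M$: the closures $\hat\beta_1$ and $\hat\beta_2$ are a priori distinct subsets of $S^3$, so you have two links $C \cup \hat\beta_1$ and $C \cup \hat\beta_2$ and two complements $M_1$, $M_2$. All you know at the outset is that $\hat\beta_1$ and $\hat\beta_2$ are isotopic in $S^3$; any isotopy matching them up moves the axis, so it identifies $M_1$ with the complement of $C' \cup \hat\beta_1$ for some other unknotted circle $C'$, not with $M_2$ relative to the given axis. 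Producing a homeomorphism $M_1 \cong M_2$ that respects the peripheral structure of both components (so that it extends over $S^3$ and can be made to fix $C$) is essentially equivalent to the lemma itself, so the fibration-uniqueness argument is circular. A sanity check shows something must be wrong: your argument nowhere uses that $K$ is a torus knot (the mapping torus of any braid fibers, and the class is always fibered by construction), so if it worked it would prove that any two minimal-index braid representatives of any knot are conjugate --- which is false; Birman--Menasco's flype and exchange-move examples give non-conjugate braid representatives of the same link on the minimal number of strands.

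Your alternative dynamical route is the right idea --- it is in substance Los's own argument --- but the sentence carrying all of its weight is incorrect as stated. A pseudo-Anosov monodromy does not ``contradict the Seifert fibered geometry'' of the torus-knot complement: pseudo-Anosov $\beta_i$ makes $S^3 \setminus (C \cup \hat\beta_i)$ hyperbolic, and Dehn filling along the axis component can perfectly well yield a Seifert fibered manifold, so there is no direct contradiction with $S^3 \setminus K$ being Seifert fibered. Ruling out pseudo-Anosov and reducible monodromies for minimal-index representatives of torus knots is precisely the hard content of Los's theorem, which you yourself defer to in your closing paragraph. So this route, made honest, reduces to citing [Los, Corollary 1.2] --- which is exactly what the paper does. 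The parts you do have right are the outer steps: periodic mapping classes of the punctured disc are rigid (unique up to conjugacy given the rotation data), and conjugate braids about the same axis have closures isotopic in the complement of that axis; what is missing is any genuine proof of the periodicity of $\beta_1$ and $\beta_2$.
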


This lemma can also be seen by considering the Markov moves as mentioned in the proof of lemma~\ref{lemma1}. Since $\beta_1$, $\beta_2$ have minimal braid index, we only use type I moves to transform $\beta_i$ into $\beta_j$. Thus, this isopoty certainly happens in the complement of the braid axis.
 
\begin{proposition}\label{proposition2}
Let $K$ be a knot given by a positive braid $B$ with $p$ strands. Suppose that $B$ has one $(1, p, q + pk)$-torus braid, where $p, q$ are positive coprime integers with $p>q\geq 2$ and $k\geq 1$, but $B \neq B_{1, p}^{q + pk}$. Also, assume that the braid $B(\sigma_{p-1}^{-1}\dots \sigma_1^{-1})^{kp}$ has braid index equal to $q$. Then, the knot $K$ is not a torus knot.
\end{proposition}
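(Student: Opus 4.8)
The plan is to argue by contradiction: assume $K$ is a torus knot and derive a contradiction from the hypothesis $B \neq B_{1,p}^{q+pk}$ together with the braid-index assumption. The first step is to pin down the braid index of $K$. Since $q+pk \geq p$ when $k\geq 1$, the positive braid $B$ contains a full twist on its $p$ strands, so by [\cite{Franks}, corollary 2.4] its braid index is $p$. Hence, if $K$ were a torus knot $T(a,b)$, we would have $\min(a,b)=\beta(K)=p$; as the braid index of $T(a,b)$ is $\min(a,b)$ and $\gcd(a,b)=1$, this forces $K = T(p,m)$ for a unique $m>p$ coprime to $p$, whose standard positive representative on $p$ strands is exactly $B_{1,p}^{m}$.

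Next I would invoke Lemma~\ref{Los}. Both $B$ and $B_{1,p}^{m}$ are $p$-strand positive braids containing a full twist, so both are minimal braid index representatives of $K=T(p,m)$ traveling around the same braid axis. Lemma~\ref{Los} then provides an isotopy in the complement of the axis carrying one to the other; since isotopies of closed braids in the complement of their axis correspond to conjugacy in $B_p$, we obtain $B = w\,B_{1,p}^{m}\,w^{-1}$ for some $w\in B_p$.

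Now I would exploit the centrality of the full twist. The element $B_{1,p}^{pk}=(\sigma_1\cdots\sigma_{p-1})^{pk}$ is central in $B_p$ and its inverse is $(\sigma_{p-1}^{-1}\cdots\sigma_1^{-1})^{kp}$, so
\[
B(\sigma_{p-1}^{-1}\cdots\sigma_1^{-1})^{kp}=w\,B_{1,p}^{m}\,(B_{1,p}^{pk})^{-1}\,w^{-1}=w\,B_{1,p}^{\,m-pk}\,w^{-1}.
\]
Thus $B(\sigma_{p-1}^{-1}\cdots\sigma_1^{-1})^{kp}$ is conjugate to $B_{1,p}^{\,m-pk}$, whose closure is the torus link $T(p,m-pk)$; since $\gcd(p,m-pk)=\gcd(p,m)=1$ this is a knot. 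By hypothesis this braid has braid index $q$, so $\min(p,m-pk)=q$. As $q<p$, this forces $m-pk=q$, that is $m=q+pk$ (and in particular $m-pk=q>0$, so the braid-index computation is legitimate).

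Finally I would reach a contradiction using the exponent sum, which is invariant under conjugation, being the image of a braid under the abelianization $B_p\to\ZZ$. On one hand, $B$ conjugate to $B_{1,p}^{q+pk}$ forces the exponent sum of $B$ to equal $(q+pk)(p-1)$. On the other hand, $B$ is a positive braid containing $B_{1,p}^{q+pk}$ as a subword, so its exponent sum is at least $(q+pk)(p-1)$, with equality precisely when $B=B_{1,p}^{q+pk}$; since $B\neq B_{1,p}^{q+pk}$, it is strictly larger. This contradiction shows that $K$ is not a torus knot. The main obstacle I anticipate is the careful justification that Los's isotopy in the complement of the axis genuinely yields conjugacy of the two braids rather than a weaker equivalence, since the whole argument hinges on transporting the central full twist through that conjugation; the remaining steps are essentially bookkeeping with exponent sums and braid indices.
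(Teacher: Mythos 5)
Your overall strategy is essentially the paper's: pin the braid index of $K$ at $p$ via [\cite{Franks}, corollary 2.4], use Lemma~\ref{Los} to identify the closed braid $B$ with the standard minimal representative of the torus knot around the same axis, strip off the $k$ full twists, and then play the braid-index-$q$ hypothesis against the positivity of $B$ and the hypothesis $B \neq B_{1,p}^{q+pk}$. Your algebraic repackaging of the twist-removal step (isotopy in the axis complement gives conjugacy in $B_p$, then cancel the central element $B_{1,p}^{pk}$) is a legitimate substitute for the paper's $(-1/k)$-Dehn surgery along the axis; indeed the paper's own remark following Lemma~\ref{Los} interprets Los's isotopy as a sequence of type I Markov moves, i.e.\ conjugations, so the point you flag as your main obstacle is not a problem.

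There is, however, a genuine gap in the step where you conclude $m = q + pk$. The formula ``braid index of the closure of $B_{1,p}^{m-pk}$ equals $\min(p, m-pk)$'' presupposes $m - pk > 0$. A priori $m - pk$ could be negative, and this case is numerically live: for $k \geq 2$ the value $m = pk - q$ satisfies $m > p$ and $\gcd(m,p)=1$, and then the closure of $B_{1,p}^{m-pk}$ is the \emph{negative} torus knot $T(p, m-pk)$, whose braid index is $\min(p, pk-m)$; the hypothesis would then give $pk - m = q$ rather than $m - pk = q$, a case your argument never excludes. Your parenthetical ``(and in particular $m-pk=q>0$, so the braid-index computation is legitimate)'' is circular, since it uses the output of the computation to justify its own premise. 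The fix is simply to reorder your own ingredients: apply the exponent-sum (abelianization) argument immediately after obtaining $B = w B_{1,p}^{m} w^{-1}$. Conjugacy invariance gives $e(B) = m(p-1)$, while positivity of $B$ together with $B \neq B_{1,p}^{q+pk}$ gives $e(B) > (q+pk)(p-1)$; hence $m > q + pk$, so $m - pk > q > 0$. (The same inequality also rules out $K$ being a negative torus knot $T(p,-m)$, a case both you and the paper leave implicit.) Now the braid-index step is legitimate and yields $\min(p, m-pk) = q$, which is impossible since $p > q$ and $m - pk > q$. This reordered argument is exactly the paper's proof: the paper first writes $K = T(p, q+pk+d)$ with $d > 0$ (your exponent-sum inequality, also left terse there), and then derives $d = 0$ from the braid-index hypothesis.
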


\begin{proof}Consider that $K$ is a torus knot. Since the braid $B$ contains at least one full twist on
$p$ strands, it follows from [\cite{Franks}, corollary 2.4] that the knot $K$ has braid index equal to $p$. So, $K$ is the $(p, q + pk + d)$-torus knot with $d>0$.
After $(-1/k)$-Dehn surgery along the braid axis $C$ of $B$, the knot $K$ becomes the knot $K'$ given by the closure of the braid $B(\sigma_{p-1}^{-1}\dots \sigma_1^{-1})^{kp}$. 
So, the knot $K'$ is the $(p, q + d)$-torus knot because, by lemma~\ref{Los}, there is an isotopy in the complement of $C$ that takes $K$ to $T(p, q + pk + d)$. As $B(\sigma_{p-1}^{-1}\dots \sigma_1^{-1})^{kp}$ has braid index equal to $q$, $K$ is also the $(q, w)-$torus knot with $w>0$. So, $q$ is equal to $p$ or $q + d$. Since $p, q$ are coprime, $q$ is equal to $q + d$. Thus, $d=0$, a contradiction. Therefore, $K$ is not a torus knot.
\end{proof}

\begin{named}{Theorem~\ref{theorem1}}
Let $K$ be a knot given by a positive braid $B$ with $p$ strands. Suppose that $B$ has one $(1, p, q + pk)$-torus braid, where $p, q$  are positive coprime integers with $p>q$ and $q, k\geq 2$, but $B \neq B_{1, p}^{q + pk}$. Also, assume that the braid $B(\sigma_{p-1}^{-1}\dots \sigma_1^{-1})^{kp}$ has braid index equal to $q$. Then, the knot $K$ is hyperbolic.
\end{named}

\begin{proof}
It immediately follows from proposition~\ref{proposition1} and proposition~\ref{proposition2}.
\end{proof}

\begin{proposition}\label{proposition10}
Consider $p, q$ positive coprime integers with $p>q$ and $q, k\geq 2$. Let $K$ be a knot given by a positive braid $B$ with $p$ strands of the form
$$B_{a_1, b_1}^{r_1}B_{a_2, b_2}^{r_2}\dots B_{a_{m-1}, b_{m-1}}^{r_{m-1}}B_{0, p}^{q+kp}B_{a_{m+1}, b_{m+1}}^{r_{m+1}}\dots B_{a_n, b_n}^{r_n}$$ and different from $B_{0, p}^{q+kp}$
or with $q$ strands of the form
$$B_{a_1, b_1}^{r_1}B_{a_2, b_2}^{r_2}\dots B_{a_{m-1}, b_{m-1}}^{r_{m-1}}B_{0, q+kp}^{p}B_{a_{m+1}, b_{m+1}}^{r_{m+1}}\dots B_{a_n, b_n}^{r_n}$$ and different from $B_{0, q+kp}^{p}$
such that if  $a_i = 0$ then $b_i\leq q$ or if  $a_i \neq 0$ then $b_i-a_i+1\leq q$.
Then, the knot $K$ is hyperbolic.
\end{proposition}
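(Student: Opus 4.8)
The plan is to deduce the proposition from Theorem~\ref{theorem1} by checking its hypotheses for the first family, and then to reduce the second family to the first by duality. Reading $B_{0,p}^{q+kp}$ as the $(1,p,q+pk)$-torus braid on all $p$ strands, the first-form braid $B$ is positive on $p$ strands, contains exactly one such factor (the auxiliary factors all have width at most $q<p$, so none of them can be a full-width torus braid), is assumed different from $B_{0,p}^{q+kp}$, and the integers satisfy $p>q$ coprime with $q,k\ge 2$. These are precisely the arithmetic and structural hypotheses of Theorem~\ref{theorem1}, so everything reduces to its remaining hypothesis: that $B(\sigma_{p-1}^{-1}\dots\sigma_1^{-1})^{kp}$ has braid index $q$.

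First I would identify this braid explicitly. Since
\[
(\sigma_{p-1}^{-1}\dots\sigma_1^{-1})^{kp}=\big((\sigma_1\dots\sigma_{p-1})^{p}\big)^{-k}
\]
is the $(-k)$-th power of the full twist on $p$ strands, it is central in $B_p$. Hence it commutes past the factors to the right of the central block and cancels $k$ of the $q+kp$ turns of $B_{0,p}^{q+kp}$. Writing $B'$ for the resulting braid and $\widehat{B'}$ for its closure, this shows that $B'$ is again a \emph{positive} braid, identical to $B$ except that the central factor is replaced by $B_{0,p}^{q}$, namely
\[
B'=B_{a_1,b_1}^{r_1}\dots B_{a_{m-1},b_{m-1}}^{r_{m-1}}\,B_{0,p}^{q}\,B_{a_{m+1},b_{m+1}}^{r_{m+1}}\dots B_{a_n,b_n}^{r_n}.
\]

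The core of the argument is then to show $\beta(\widehat{B'})=q$, which I would do by exhibiting a positive braid on $q$ strands carrying a full twist and closing to the same link, after which Franks' result [\cite{Franks}, corollary 2.4] forces the braid index to equal $q$. The factor $B_{0,p}^{q}$ closes to the $(p,q)$-torus link, which by the Birman--Kofman duality of T-links \cite{newtwis} is equally the closure of $B_{0,q}^{p}=(\sigma_1\dots\sigma_{q-1})^{p}$ on $q$ strands; since $p>q$, this dual factor contains at least one full twist on the $q$ strands. The hypothesis that every auxiliary factor $B_{a_i,b_i}^{r_i}$ has width at most $q$ (that is, $b_i\le q$ when $a_i=0$ and $b_i-a_i+1\le q$ otherwise) is exactly what guarantees that, under this duality, each such factor is carried to a positive braid supported on at most $q$ of the strands, so that all of $\widehat{B'}$ is presented as the closure of a positive $q$-strand braid still containing the full twist coming from $B_{0,q}^{p}$. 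Applying [\cite{Franks}, corollary 2.4] then gives $\beta(\widehat{B'})=q$, the last hypothesis of Theorem~\ref{theorem1}, and that theorem yields that $K$ is hyperbolic. For the second family I would run the same Birman--Kofman duality in the opposite direction: the braid built around $B_{0,q+kp}^{p}$ is the dual presentation of a first-form braid and represents the same knot $K$, so its hyperbolicity is immediate once the first form is settled.

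The step I expect to be the main obstacle is the middle of the previous paragraph: verifying that the duality applies not only to the single full-width factor $B_{0,p}^{q}$ but simultaneously to the ``floating'' auxiliary factors $B_{a_i,b_i}^{r_i}$ with $a_i\ne 0$, and that the width-at-most-$q$ condition really does keep the resulting $q$-strand braid positive while leaving the full twist intact. Making this precise—rather than invoking duality only for a pure T-link in standard Birman--Kofman form—is where the careful bookkeeping lies, and it is the reason the width hypothesis is imposed.
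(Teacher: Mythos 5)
Your outline follows the same route as the paper: cancel the $k$ full twists (using centrality of the full twist) to obtain the positive braid $B'$ whose central factor is $B_{0,p}^{q}$, show that the closure of $B'$ has braid index $q$ by re-presenting it as a positive $q$-strand braid containing a full twist, invoke [\cite{Franks}, corollary 2.4], and finish with Theorem~\ref{theorem1}. However, the step you yourself flag as ``the main obstacle'' is not a bookkeeping detail --- it is the entire mathematical content of the proposition, and your proposal leaves it unresolved. The Birman--Kofman duality you appeal to is a statement about T-links, that is, braids of the form $(\sigma_1\cdots\sigma_{r_1-1})^{s_1}\cdots(\sigma_1\cdots\sigma_{r_k-1})^{s_k}$ in which every factor starts at the first strand and the widths are nested; it says nothing about a braid containing floating factors $B_{a_i,b_i}^{r_i}$ with $a_i\neq 0$, nor about factors lying both above and below the full-width block. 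Under the transposition of the torus, longitudinal strands become meridional ones, and a braid factor inserted locally on some subset of the strands does not transparently become another positive braid factor on the other side; so the assertion that ``each such factor is carried to a positive braid supported on at most $q$ of the strands'' is exactly what has to be proved, and nothing in your proposal proves it.

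The paper closes this gap with Lemma~\ref{half-twist}: each auxiliary factor $B_{a_i,b_i}^{r_i}$ is first pushed below the block $B_{0,p}^{q}$ around the braid closure, and then encoded as full and/or half twists along (up to) three unknotted circles, each encircling at most $q$ strands --- this is precisely where the width hypothesis ($b_i\leq q$ when $a_i=0$, $b_i-a_i+1\leq q$ otherwise) enters. These circles are genuine link components in the complement, so they can be carried through the rotation isotopy (around a diagonal of the square giving the torus) that deforms the $(0,p,q)$-torus braid into the $(0,q,p)$-torus braid; afterwards they encircle at most $q$ meridional strands, can be pushed below $B_{0,q}^{p}$, and the twists can be reapplied, yielding a positive $q$-strand braid that still contains a full twist since $p>q$. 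Without this circle-encoding device (or an equivalent one), your appeal to duality remains heuristic; with it, your outline becomes the paper's argument. The same criticism applies to your treatment of the second family: the paper does not quote the first case but isotopes that braid to a positive $p$-strand braid $B''$ containing $B_{0,p}^{q+kp}$ by the same mechanism and re-verifies the braid index hypothesis of Theorem~\ref{theorem1} for $B''$, rather than citing a duality that, as literally stated in \cite{newtwis}, does not cover these braids.
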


\begin{proof}We will calculate the braid index of the knot $K'$ given by the closure of the following braid, denoted by $B'$,
$$B_{a_1, b_1}^{r_1}B_{a_2, b_2}^{r_2}\dots B_{a_{m-1}, b_{m-1}}^{r_{m-1}}B_{0, p}^{q}B_{a_{m+1}, b_{m+1}}^{r_{m+1}}\dots B_{a_n, b_n}^{r_n}.$$
\begin{figure}
\includegraphics[scale=0.4]{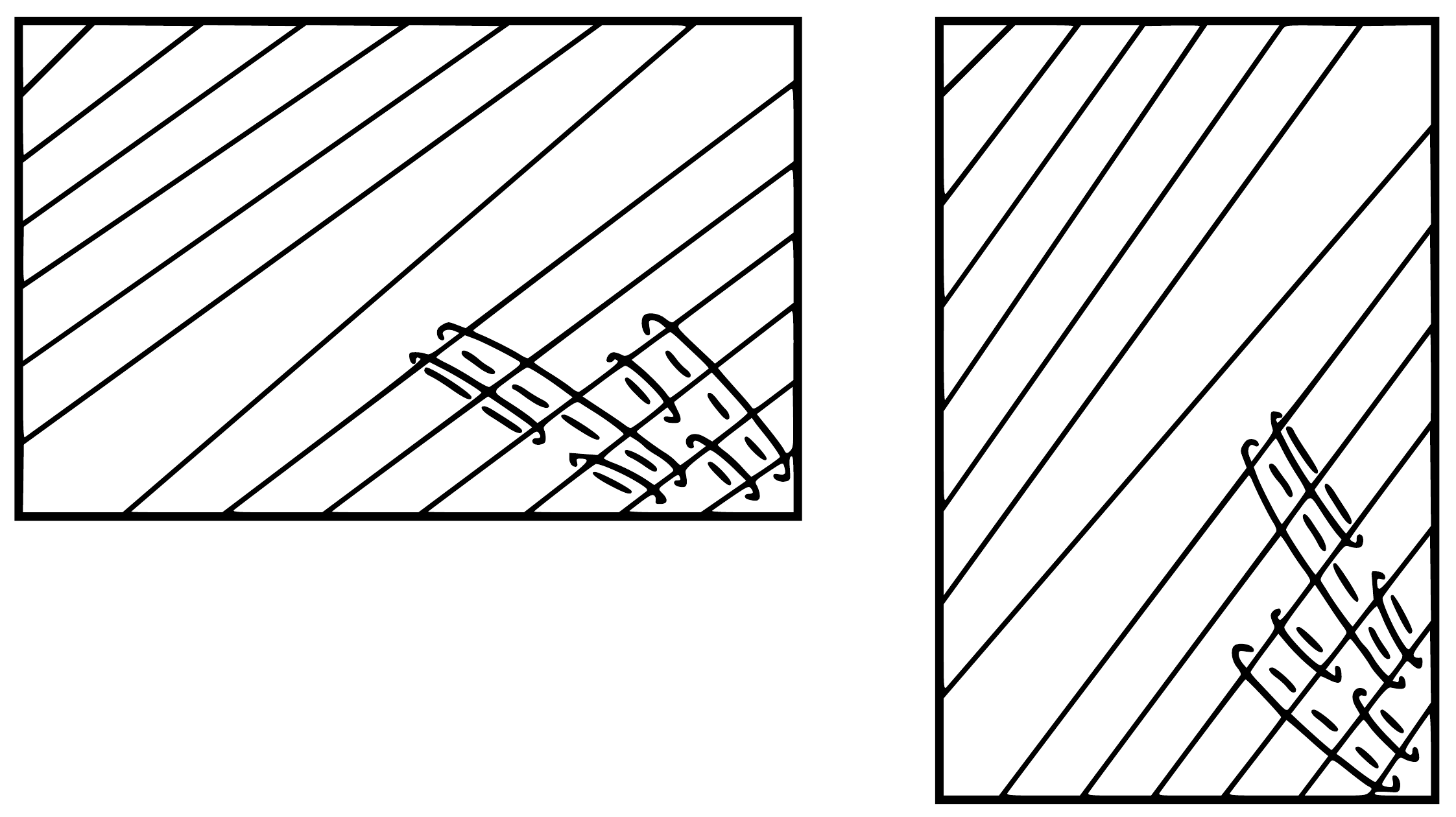} 
\caption{The second drawing is obtained from the first one by rotating it around the diagonal connecting the upper left vertex to the lower right vertex.}
\label{HK3}
\end{figure}

We push all $(a_i, b_i, r_i)$-torus braids which are above the biggest torus braid $B_{0, p}^{q}$  around the braid closure to be below  $B_{0, p}^{q}$. 

From lemma~\ref{half-twist}, each $(a_i, b_i, r_i)$-torus braid can be obtained by full and/or half twists along three circles where each one is encircling at most $q$ strands. Thus, $B'$ is obtained from the $(0, p, q)$-torus braid together with all these circles by full and/or half twists along them. 
By using the isotopy which deforms the $(p, q)$-torus knot to the $(q, p)$-torus knot, we can deform the $(0, p, q)$-torus braid to the $(0, q, p)$-torus braid. 
This isotopy can be described as a rotation around a diagonal of the square that gives the torus in which the $(p, q)-$torus knot lies, as illustrated in the first and second drawings of Figure~\ref{HK3}.
After this isotopy, the circles encircle the meridional(horizontal) lines of the $(0, q, p)$-torus braid. Furthermore, they enclose the same amount of strands as before, which are at most $q$ strands. 
Now we can push all these circles to lie below the $(0, q, p)$-torus braid and then apply the full and/or half twists as before.
We can see that after this last step we obtain a knot equivalent to the initial one(given by the closure of $B'$) because these isotopes preserve half twists as the tangles(bunches of crossings) of the torus braids travel in the same direction all the time.
The knot $K'$ is now given by a positive braid with the $(0, q, p)$-torus braid on the top. Since $p>q$, this braid has at least one full twist.  Therefore, it follows from \cite{Franks} that the knot $K'$ has braid index equal to $q$.
 
Now it follows from theorem~\ref{theorem1} that the knot given by the closure of the braid 
$$B_{a_1, b_1}^{r_1}B_{a_2, b_2}^{r_2}\dots B_{a_{m-1}, b_{m-1}}^{r_{m-1}}B_{0, p}^{q+kp}B_{a_{m+1}, b_{m+1}}^{r_{m+1}}\dots B_{a_n, b_n}^{r_n}$$ is hyperbolic since $B' = B(\sigma_{p-1}^{-1}\dots \sigma_{1}^{-1})^{kp}$.

With similar ideas, we can isotope the braid $$B_{a_1, b_1}^{r_1}B_{a_2, b_2}^{r_2}\dots B_{a_{m-1}, b_{m-1}}^{r_{m-1}}B_{0, q+kp}^{p}B_{a_{m+1}, b_{m+1}}^{r_{m+1}}\dots B_{a_n, b_n}^{r_n}$$
to a positive braid $B''$ with $p$ strands containing the torus braid $B_{0, p}^{q+kp}$. As before, we can see that the braid 
$B''(\sigma_1^{-1}\dots\sigma_{p-1}^{-1})^{kp}$ has braid index equal to $q$. Then, from theorem~\ref{theorem1}, 
the knot given by the closure of the braid 
$$B_{a_1, b_1}^{r_1}B_{a_2, b_2}^{r_2}\dots B_{a_{m-1}, b_{m-1}}^{r_{m-1}}B_{0, q+kp}^{p}B_{a_{m+1}, b_{m+1}}^{r_{m+1}}\dots B_{a_n, b_n}^{r_n}$$ is also hyperbolic.
\end{proof}

\section{Applications to the geometric classification of T-knots and twisted torus knots}
In this section we apply the results of the last section to get some corollaries related to the geometric classification of T-knots and twisted torus knots.

T-links are defined as follows:
for $2\leq r_1< \dots < r_k$, and all $s_i>0$, the T-link $T((r_1,s_1), \dots, (r_k,s_k))$ is defined to be the closure of the following braid
\[ (\sigma_1\sigma_2\dots\sigma_{r_1-1})^{s_1}(\sigma_1\sigma_2\dots\sigma_{r_2-1})^{s_2}\dots(\sigma_1\sigma_2\dots\sigma_{r_k-1})^{s_k}.\]
Birman and Kofman showed that they are equivalent to Lorenz links \cite{newtwis}.

The author and Purcell classified some T-knots obtained by full twists on a $(p, q)$-torus knot in \cite{dePaivaPurcell:SatellitesLorenz}. The following corollary also includes T-knots not obtained by full twists, which are more difficult to find their knot types since they are not obtained by Dehn filling.

\begin{named}{Corollary~\ref{T-knot}}
Let $p, q$ be positive coprime integers. Consider $p>q\geq r_1>1$.  Then, for $k\geq 2$, the T-knots $$T((r_n, s_n), \dots, (r_1, s_1), (p, kp + q))\textrm{ and }T((r_n, s_n), \dots, (r_1, s_1), (kp + q, p))$$ are hyperbolic.  
\end{named}

\begin{proof} 
It follows from proposition~\ref{proposition10} since the first T-knot is the closure of the braid $B_{0, r_n}^{s_n}\dots B_{0, r_{1}}^{s_{1}} B_{0, p}^{kp + q}$ and the second T-knot is the closure of the braid $B_{0, r_n}^{s_n}\dots B_{0, r_{1}}^{s_{1}} B_{0, kp + q}^{p}$.
\end{proof} 

The twisted torus knot $T(p, q; r, s)$ is obtained by twisting $r$ adjacent strands of the $(p, q)$-torus knot a total of $s$ full twists(see \cite{LeeThiago} for more details). If $r<p$, then the twisted torus knot $T(p, q; r, s)$ is given by the braid 
$$(\sigma_1\sigma_2\dots\sigma_{p-1})^{q}(\sigma_1\sigma_2\dots\sigma_{r-1})^{sr}$$ if $s>0$ and by $$(\sigma_1\sigma_2\dots\sigma_{p-1})^{q}(\sigma_{r-1}^{-1}\sigma_{r-2}^{-1}\dots\sigma_{1}^{-1})^{sr}$$ if $s<0$.

The following corollary addresses the conjecture 4.3 of \cite{dePaiva:Unexpected}.

\begin{named}{Corollary~\ref{positiveTTT}}
Let $p, q$ be positive coprime integers. Consider $p>q\geq r>1$.  Then, for $k\geq 2$, the twisted torus knots $$T(p, kp + q; r, 1)\textrm{ and }T(kp + q, p; r, 1)$$ are hyperbolic.  
\end{named}

\begin{proof} 
It follows from the last corollary that the twisted torus knot $T(p, kp + q; r, 1)$ is hyperbolic since $T(p, kp + q; r, 1) = T((r, r), (p, kp + q))$. From [\cite{Knottypes}, lemmas 1,3,4,5], the twisted torus knot $T(kp + q, p; r, 1)$ is equivalent to the twisted torus knot $T(p, kp + q; r, 1)$. Therefore, $T(kp + q, p; r, 1)$ is also hyperbolic.
\end{proof} 

The author and Lee classified some negative twisted torus knots which are torus knots in \cite{LeeThiago}. The next corollary classifies some negative twisted torus knots which are hyperbolic.

\begin{named}{Corollary~\ref{negativeTTT}}
Let $p, q$ be positive coprime integers with $p>q$. Consider $p-q\geq r>1$. Then, for $k\geq 3$, the twisted torus knots $$T(p, kp + q; r, -1)\textrm{ and }T(kp + q, p; r, -1)$$ are hyperbolic.  
\end{named}

\begin{proof} 
Suppose that $S^3-T(p, kp + q; r, -1)$ has an essential torus $T$. By Ito \cite{Ito}(see example 5.7), $T$ doesn't intersect the braid axis $C$ of $T(p, kp + q; r, -1)$. So, $T(p, kp + q; r, -1)$ is a generalized $b$-cabling of a knot $L$ with $b>0$, where $L$ is the core of the solid torus bounded by $T$. Thus, $b$ must divide $p$. 
After $(-1/(k+3))$-Dehn surgery along $C$, the knot  $T(p, kp + q; r, -1)$ becomes the knot $K'$ given by the closure of the braid $B_{0, p}^{2p + p -q}B_{0, r}^{r}$ and the torus $T$ becomes a new torus $T'$ which doesn't intersect the braid axis of the last braid. The torus $T$ can't be trivial otherwise $b = p$ by lemma~\ref{lemma1}, which is not possible as $T$ wouldn't be essential in $S^3-T(p, kp + q; r, -1)$. So, $T$ is knotted and thus essential in $S^3-K'$. But, by the last corollary, the twisted torus knot $T(p, 3p -q; r, 1)$, which is given by the closure of the braid $B_{0, p}^{3p -q}B_{0, r}^{r}$, is hyperbolic, contradiction. Therefore, $T$ does not exist.

Consider now that the twisted torus knot $T(p, kp + q; r, -1)$ is a torus knot. By sending the crossings of the braid $(\sigma_{r-1}^{-1}\dots \sigma_{1}^{-1})^{r}$ clockwise around the braid closure, they are cancelled with some crossings of the first $r$ horizontal lines of $(\sigma_{1}\dots \sigma_{p-1})^{kp + q}$. 
After that, the total braid becomes a positive braid and has at least two full twists on $p$ strands. Thus, it has braid index equal to $p$ \cite{Franks}. So, $T(p, kp + q; r, -1)$ is a $(p, d)$-torus knot with $d>0$. From lemma~\ref{Los}, there is an isotopy in the complement of $C$ that takes $T(p, kp + q; r, -1)$ to $T(p, d)$. Then, we do $(-1/(k+3))$-Dehn surgery along $C$ to transform $T(p, kp + q; r, -1)$ into $T(p, 3p-q; r, 1)$. 
But, then $T(p, 3p-q; r, 1)$ would also be a $(p, d')$-torus knot with $d'>0$. But, $T(p, 3p-q; r, 1)$ can't be a torus knot due to the existence of the last corollary. 

Finally, the twisted torus knot $T(kp + q, p; r, -1)$ is equivalent to the twisted torus knot $T(p, kp + q; r, -1)$ \cite{Knottypes}. Therefore, $T(kp + q, p; r, -1)$ is hyperbolic as well. 
\end{proof} 
%% To make the bibliography:
\bibliographystyle{amsplain}  %% Uses AMS format for bibliography

%% Put all the bib entries in a file references.bib
\bibliography{Hyperbolicknotsgivenbypositivebraidswithatleasttwofulltwists}

\end{document}